\theoremstyle{plain}
\newtheorem{theorem}{Theorem}
\newtheorem{proposition}{Proposition}
\newtheorem{lemma}{Lemma}
\newtheorem{corollary}{Corollary}
\theoremstyle{definition}
\newtheorem{definition}{Definition}
\theoremstyle{remark}
\newtheorem{remark}{Remark}
\newcommand{\bbN}{\mathbb{N}}
\newcommand{\bbZ}{\mathbb{Z}}
\title{Resolution of Erd\H{o}s Problem \#728: a writeup of Aristotle's Lean proof}
\author{Nat Sothanaphan\footnote{natsothanaphan@gmail.com}}
\date{Jan 27, 2026}
\begin{document}
\maketitle

\begin{abstract}
We provide a writeup of a resolution of Erd\H{o}s Problem \#728; this is the first Erd\H{o}s problem (a problem proposed by Paul Erd\H{o}s which has been collected in the Erd\H{o}s Problems website \cite{Bloom728}) regarded as fully resolved autonomously by an AI system. The system in question is a combination of GPT-5.2 Pro by OpenAI and Aristotle by Harmonic, operated by Kevin Barreto. The final result of the system is a formal proof written in Lean, which we translate to informal mathematics in the present writeup for wider accessibility.

The proved result is as follows. We show a logarithmic-gap phenomenon regarding factorial divisibility: For any constants $0<C_1<C_2$ and $0 < \varepsilon < 1/2$ there exist infinitely many triples $(a,b,n)\in\mathbb N^3$ with $\varepsilon n \le a,b \le (1-\varepsilon)n$ such that
\[
a!\,b!\mid n!\,(a+b-n)!\qquad\text{and}\qquad
C_1\log n < a+b-n < C_2\log n.
\]
The argument reduces this to a binomial divisibility
$\binom{m+k}{k}\mid\binom{2m}{m}$ and studies it prime-by-prime. By Kummer's theorem, $\nu_p\binom{2m}{m}$ translates into a carry count for doubling $m$ in base $p$.
We then employ a counting argument to find, in each scale $[M,2M]$, an integer $m$ whose base-$p$ expansions simultaneously force many carries when doubling $m$, for every prime $p\le 2k$, while avoiding the rare event that one of $m+1,\dots,m+k$ is divisible by an unusually high power of $p$.
These ``carry-rich but spike-free'' choices of $m$ force the needed $p$-adic inequalities and the divisibility. The overall strategy is similar to results regarding divisors of $\binom{2n}{n}$ studied earlier by Erd\H{o}s \cite{Erdos557} and by Pomerance \cite{Pomerance2015}.
\end{abstract}

\section{Introduction}
Erd\H{o}s Problem \#728 \cite{Bloom728} asks how large the gap
\[
k:=a+b-n
\]
can be, assuming the factorial divisibility
\[
a!\,b!\ \mid\ n!\,k!.
\]
Equivalently, writing $N=a+b$, this asks for large $k$ such that
\[
\binom{N}{k}\ \Big|\ \binom{N}{a}.
\]
Without additional restrictions there are easy/extremal families (for instance, one may take $k$ huge by taking $a$ and $b$ very large).
One way to exclude such examples is to impose a condition $a,b \le (1-\varepsilon) n$ for some $\varepsilon > 0$; our construction actually satisfies a much stronger criterion,
with $a$ and $b$ extremely close to $n/2$, but Theorem \ref{thm:window} below only records the size of~$k$.

\medskip
The qualitative feature exploited in the proof is that the right-hand side contains the full factorial $k!$.
For a fixed prime $p$, this contributes the term $\nu_p(k!)$, which grows like $k/(p-1)$ and thus can be arranged to dominate sporadic
$p$-adic contributions coming from the factor $(m+i)$ in a consecutive block.
After an explicit change of variables, the problem reduces to proving the binomial divisibility
\[
\binom{m+k}{k}\ \Big|\ \binom{2m}{m},
\]
where we will take $k\asymp \log m$.
On the $p$-adic level this becomes the inequality
\[
V_p(m,k)\ \le\ \kappa_p(m)\qquad\text{for every prime $p$,}
\]
where $V_p(m,k):=\max_{1\le i\le k}\nu_p(m+i)$ and $\kappa_p(m):=\nu_p\binom{2m}{m}$.
Kummer's theorem identifies $\kappa_p(m)$ with the number of carries when adding $m+m$ in base~$p$.
Thus the task is: choose $m$ so that doubling in base $p$ produces many carries, for all relevant primes $p$,
while avoiding the rare event that one of $m+1,\dots,m+k$ is divisible by an unusually large power of~$p$.

\begin{figure}[h]
\centering
\includegraphics[width=0.8\linewidth]{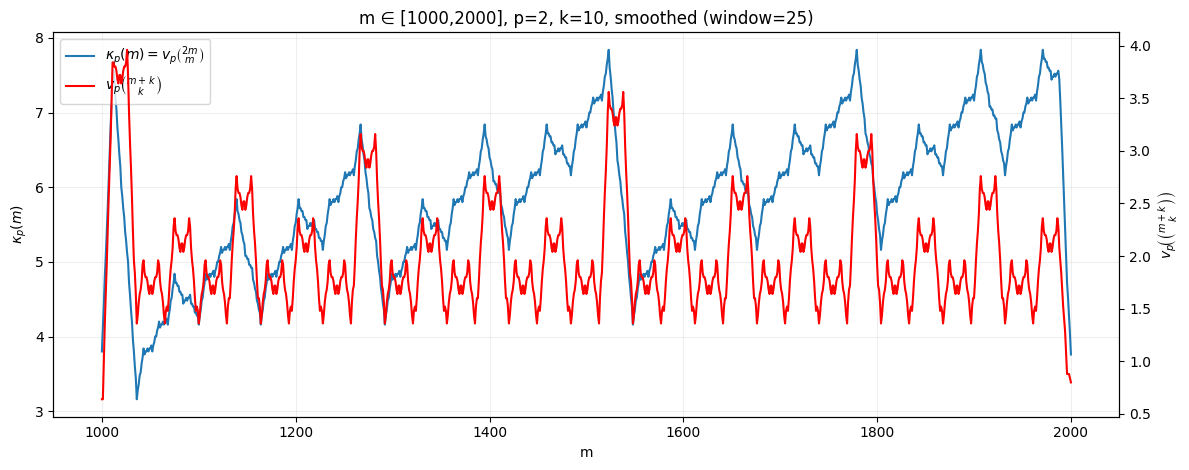}
\includegraphics[width=0.8\linewidth]{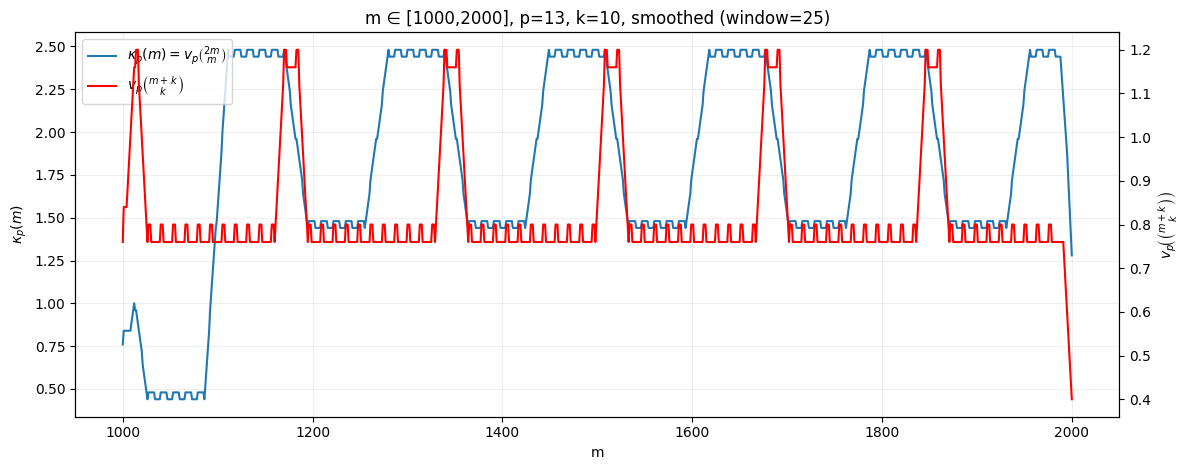}
\caption{Plots showing the required inequality $\nu_p(\binom{m+k}{k}) \le \kappa_p(m)$; we want to find places where the red is below the blue. Both plots have $m \in [1000, 2000]$ and $k=10$. The two plots are for primes $p=2$ and $p=13$, respectively. Note that these plots are smoothed by a simple moving average with window $25$, since the unsmoothed plots are dense and difficult to read. Plotting code was partially provided by ChatGPT.}
\end{figure}

We split primes into two regimes.
If $p>2k$ then any divisibility $p^J\mid (m+i)$ with $1\le i\le k$ forces $J$ carries, so this range is handled quickly.
The main work is for primes $p\le 2k$.
There we force many carries by demanding that many of the first $L_p$ base-$p$ digits of $m$ are at least $\lceil p/2\rceil$ and exclude the exceptional set where some $m+i$ is divisible by an unusually high power $p^{J_p+t}$.
A counting argument on $m\in[M,2M]$ shows that for every large $M$ there exists an $m$ satisfying these conditions for all primes $p\le 2k$.
The fact that $k\asymp \log M$ then yields the desired logarithmic gap.

\begin{theorem}[Logarithmic gap window]\label{thm:window}
Fix $0<C_1<C_2$. Let $0 < \varepsilon < 1/2$. There exist infinitely many $(a,b,n)\in\mathbb N^3$ with $\varepsilon n \le a,b \le (1-\varepsilon)n$ such that 
\[
a!\,b!\mid n!\,(a+b-n)!\qquad\text{and}\qquad C_1\log n < a+b-n < C_2\log n.
\]
\end{theorem}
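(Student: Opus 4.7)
My plan is to realize the triple by the ansatz $a=m$, $b=m+k$, $n=2m$. Then $a+b-n=k$, and the factorial divisibility $a!\,b!\mid n!\,k!$ rearranges algebraically to the binomial divisibility $\binom{m+k}{k}\mid\binom{2m}{m}$. The window constraint $\varepsilon n\le a,b\le(1-\varepsilon)n$ reduces to $k\le(1-2\varepsilon)m$, automatic for $k=O(\log m)$ once $m$ is large. Since $\log n=\log m+O(1)$, it suffices to fix $C\in(C_1,C_2)$, set $k=\lfloor C\log m\rfloor$, and exhibit infinitely many $m$ for which $\binom{m+k}{k}\mid\binom{2m}{m}$; then $C_1\log n<k<C_2\log n$ holds automatically for $m$ large.

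Next I would fix a prime $p$ and reduce to a $p$-local inequality. From $\nu_p\binom{m+k}{k}=\sum_{i=1}^k\nu_p(m+i)-\nu_p(k!)$ together with the observation that in any $k$ consecutive integers the count of multiples of $p^j$ differs from $\lfloor k/p^j\rfloor$ by at most $1$ (and equals $0$ whenever $p^j>k$), a term-by-term comparison gives $\nu_p\binom{m+k}{k}\le V_p(m,k)$. Since Kummer identifies $\kappa_p(m)=\nu_p\binom{2m}{m}$ with the number of carries when doubling $m$ in base $p$, it then suffices to establish $V_p(m,k)\le\kappa_p(m)$ for every prime $p$. Primes $p>2k$ come for free: if $p^J\mid m+i$ with $1\le i\le k<p/2$, then the bottom $J$ base-$p$ digits of $m$ are $p-i,p-1,\ldots,p-1$, each strictly greater than $p/2$, and doubling produces $J$ carries, so $\kappa_p(m)\ge J=V_p(m,k)$ unconditionally.

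The real work is for primes $p\le 2k$. I would fix a large scale $M$, set $k=\lfloor C\log M\rfloor$, and average over $m\in[M,2M]$. For each prime $p\le 2k$ I would pick a threshold $\tau_p$ slightly above $\log_p k$, consider a block of the bottom $L_p\asymp\log_p M$ base-$p$ digits of $m$, and introduce two events: the carry-rich event $G_p$ that at least $\tau_p$ of these digits are $\ge\lceil p/2\rceil$, and the spike event $B_p$ that some $m+i$ with $1\le i\le k$ is divisible by $p^{\tau_p}$. On $G_p\setminus B_p$ the carry-rich digits still produce carries upon doubling even after any incoming carries, so $\kappa_p(m)\ge\tau_p>V_p(m,k)$. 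A Chernoff bound makes $\mathbb{P}(G_p^c)$ exponentially small in $L_p$, while counting multiples in $[M+1,2M+k]$ gives $\mathbb{P}(B_p)\le k/p^{\tau_p}$; tuning the parameters so that $\sum_{p\le 2k}(\mathbb{P}(G_p^c)+\mathbb{P}(B_p))<1$ and applying a union bound produces an $m\in[M,2M]$ lying in $G_p\setminus B_p$ for every $p\le 2k$. Letting $M\to\infty$ generates infinitely many valid triples.

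The main obstacle will be the quantitative balancing at the last step. There are $\asymp k/\log k$ primes below $2k$; the base-$p$ digits of $m$ on $[M,2M]$ are only approximately uniform and not exactly independent across different $j$; and $\tau_p$ must be simultaneously large enough to dominate $V_p(m,k)$ on the good event and small enough to keep $k/p^{\tau_p}$ summable in $p$. Matching these constraints is precisely where the scaling $k\asymp\log M$ is forced, and where a careful prime-by-prime analysis---rather than a crude bound---becomes essential.
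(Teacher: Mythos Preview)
Your proposal is correct and follows essentially the same route as the paper: the same ansatz $(a,b,n)=(m,m+k,2m)$, the same reduction to $V_p(m,k)\le\kappa_p(m)$ via the interval-valuation bound $\nu_p\binom{m+k}{k}\le V_p(m,k)$, the same free treatment of primes $p>2k$ via the forced bottom digits, and the same carry-rich/spike-free counting on $[M,2M]$ with Chernoff plus a union bound. The only cosmetic differences are that the paper uses two separate thresholds (carry threshold $\mu_p/2\asymp L_p$ and spike threshold $J_p+t(M)$ with $t(M)\asymp\log\log M$) rather than your single $\tau_p$, and it sidesteps your worry about approximate uniformity by taking $p^{L_p}\le M^{1-\eta}$ so that the bottom $L_p$ digits are exactly i.i.d.\ uniform on residues. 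One small slip: the count of multiples of $p^j$ among $k$ consecutive integers need not be $0$ when $p^j>k$ (it can be $1$); what you actually need, and what makes the term-by-term comparison go through, is that $N_j=0$ once $j>V_p(m,k)$.
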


\begin{remark}
Terence Tao observed in \cite{Bloom728} that this result is not in its best possible form. Indeed, the main constraint is $k \cdot e^{-\tilde{\mu}(M)/8} = o(1)$ step in Lemma \ref{lem:bad-small}, which can be satisfied up to $k = \exp(c \sqrt{\log n})$ for some small constant $c > 0$.
\end{remark}

After an earlier version of this writeup was completed, Carl Pomerance has written up a note \cite{Pomerance2026} extending his previous work \cite{Pomerance2015} to arrive at results similar to ours. Independently, Erd\H{o}s problems \#729 \cite{Bloom729} and \#401 \cite{Bloom401} were also solved by modifications of the argument in this writeup. We discuss these further developments in the second Appendix.

\medskip
\textbf{Acknowledgements}. The author thanks forum participants in \cite{Bloom728} for their fruitful discussion. They are, in alphabetical order, Boris Alexeev, Kevin Barreto, Thomas Bloom, Moritz Firsching, username KoishiChan, username Leeham, username old-bielefelder, Terence Tao. Specific to producing this writeup: Boris Alexeev ran Aristotle to simplify the proof; Kevin Barreto produced the original proofs; KoishiChan performed literature search; Terence Tao suggested further ideas and found extra literature. Finally, the author thanks Carl Pomerance for extending his work in light of the present developments.

\section{Related literature and provenance}

Erd\H{o}s Problem \#728, as stated in \cite{Bloom728}, asks how large the gap $a+b-n$ can be under the factorial divisibility
\[
a!\,b!\ \mid\ n!\,(a+b-n)!.
\]
This problem originally appears in \cite{EGRS1975}.

\medskip
The proof strategy is closely related to several divisibility results for central binomial coefficients, usually governed by Kummer's carry interpretation of $p$-adic valuations.

Erd\H{o}s' problem \emph{Aufgabe~557} in Elemente der Mathematik \cite{Erdos557} concerns the stronger divisibility $a!\,b!\mid n!$
(i.e.\ the case $k=0$).
Erd\H{o}s proved an upper bound $a+b<n+O(\log n)$ in this setting, and observed that this is sharp up to the constant.
The published solutions show a strong ``almost all $n$'' statement:
for a sufficiently small absolute constant $C>0$ and $a=\lfloor C\log n\rfloor$,
one has for almost all $n$ the interval product divisibility
\[
(n+1)\cdots(n+a)\ \Big|\ \binom{2n}{n}.
\]
This is close in spirit to the present proof.
Indeed, after rewriting $\binom{m+k}{k}=(m+1)\cdots(m+k)/k!$,
our target divisibility $\binom{m+k}{k}\mid\binom{2m}{m}$ is a ``binomial-coefficient'' analogue of such interval-product divisibilities.

The paper of Erd\H{o}s--Graham--Ruzsa--Straus \cite{EGRS1975} which contains the present question itself studies the distribution of prime factors of $\binom{2n}{n}$,
quantifying, among other things, how small primes contribute high powers and how primes are avoided. We do not use specific results from \cite{EGRS1975}.

Pomerance \cite{Pomerance2015} studies questions about divisors of the central binomial coefficient $\binom{2n}{n}$.
One of his main theorems shows that for each fixed $k\ge 1$, the set of $n$ for which $n+k$ divides $\binom{2n}{n}$ has asymptotic density~$1$
(\cite[Theorem~2]{Pomerance2015}).
The methods in \cite{Pomerance2015} exploit Kummer's theorem and the idea that base-$p$ digit patterns behave ``randomly.''
Our argument is in the same vein, but differs in two key respects:
(i) we treat a \emph{growing} window length $k\asymp \log n$ rather than a fixed $k$,
and (ii) we work with a structured divisor $\binom{m+k}{k}$ of $\binom{2m}{m}$ rather than a single linear factor $n+k$.

After Pomerance's paper, there has been further progress on related divisibility questions for $\binom{2n}{n}$, in several directions.
Ford and Konyagin \cite{FordKonyagin2021} prove that for each fixed $\ell\in\mathbb N$ the set of $n$ such that $n^\ell\mid \binom{2n}{n}$ has a positive asymptotic density $c_\ell$, and they obtain an asymptotic formula for $c_\ell$ as $\ell\to\infty$.
They also show that $\#\{n\le x:(n,\binom{2n}{n})=1\}\sim c x/\log x$ for an explicit constant $c$.

Croot, Mousavi, and Schmidt \cite{CrootMousaviSchmidt2024} study a problem motivated by Graham's conjecture on \\ $\gcd\!\bigl(\binom{2n}{n},105\bigr)$.
They show that for any fixed $r\ge 1$ and any $0<\varepsilon<1/(20r^2)$, there is a threshold $p_0(r,\varepsilon)$ such that for any distinct primes $p_1,\dots,p_r\ge p_0(r,\varepsilon)$ there exist infinitely many $n$ for which
\[
\nu_{p_j}\!\left(\binom{2n}{n}\right)\ \le\ \varepsilon\,\frac{\log n}{\log p_j}\qquad (j=1,\dots,r).
\]
By Kummer's theorem, $\nu_{p}\!\bigl(\binom{2n}{n}\bigr)$ equals the number of carries when adding $n+n$ in base $p$.
Thus \cite{CrootMousaviSchmidt2024} constructs integers $n$ that are \emph{carry-poor} for doubling in each of the bases $p_1,\dots,p_r$ simultaneously.

Bloom and Croot \cite{BloomCroot2025} strengthen this line of work by studying integers with small digits in multiple bases.
For distinct coprime bases $g_1,\dots,g_r$ that are sufficiently large, depending on $r$, and any $\varepsilon>0$, they produce infinitely many $n$ for which all but $\varepsilon\log n$ base-$g_i$ digits are $\le g_i/2$ simultaneously for all $i$. This establishes a ``weak'' version of the above Graham's conjecture.

The work of Croot--Mousavi--Schmidt and Bloom--Croot is primarily ``carry-poor'': it produces integers whose base-$p$ expansions are arranged so that doubling creates few carries in several fixed bases.
In contrast, the present proof is ``carry-rich'', forcing many carries simultaneously.

\section{Reduction to a valuation inequality}\label{sec:reduction}
Let $m\ge 1$ and $k\ge 1$ be integers and set
\begin{equation}\label{eq:family}
n:=2m,\qquad b:=m,\qquad a:=m+k.
\end{equation}
Then $b=n/2$ and $a+b-n=k$.

We will choose a large scale $M$ and search for some $m\in[M,2M]$.
We set
\begin{equation}\label{eq:kval}
k := \lfloor c\log M\rfloor,
\end{equation}
where $c>0$ is a constant. In the final step (proof of Theorem~\ref{thm:window}) we choose any $c\in(C_1,C_2)$ to obtain the two-sided logarithmic window.

Thus, the main work is the factorial divisibility in Theorem~\ref{thm:window}. With $n=2m$, $b=m$, $a=m+k$, the factorial divisibility becomes
\begin{equation}\label{eq:target-factorial}
(m+k)!\,m!\ \mid\ (2m)!\,k!.
\end{equation}

\begin{lemma}[Binomial reformulation]\label{lem:binom-reform}
For all $m,k\in\bbN$,
\[
(m+k)!\,m!\ \mid\ (2m)!\,k!
\quad\Longleftrightarrow\quad
\binom{m+k}{k}\ \Big|\ \binom{2m}{m}.
\]
\end{lemma}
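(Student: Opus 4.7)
The plan is to expand both binomial coefficients as fractions of factorials and observe that the alleged divisibility ratio is literally the same rational number on both sides. Writing
\[
\binom{m+k}{k} = \frac{(m+k)!}{k!\,m!}, \qquad \binom{2m}{m} = \frac{(2m)!}{m!\,m!},
\]
the formal quotient
\[
\frac{\binom{2m}{m}}{\binom{m+k}{k}} \;=\; \frac{(2m)!\,k!}{(m+k)!\,m!}
\]
is the common object in both statements, so each divisibility is equivalent to this single rational number being a nonnegative integer.

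To make this precise in each direction, I would simply cross-multiply. Assume first $(m+k)!\,m! \mid (2m)!\,k!$, so that $(2m)!\,k! = q\,(m+k)!\,m!$ for some $q\in\bbN$. Dividing through by the positive integer $k!\,m!\,m!$ yields $\binom{2m}{m} = q\,\binom{m+k}{k}$, giving the binomial divisibility. Conversely, if $\binom{2m}{m} = q\,\binom{m+k}{k}$, then multiplying both sides by $k!\,m!\,m!$ recovers $(2m)!\,k! = q\,(m+k)!\,m!$, which is the factorial divisibility.

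The only subtlety worth checking is that the $q$ produced in each direction really is a nonnegative integer, not just a rational; but this is automatic, since both $\binom{m+k}{k}$ and $\binom{2m}{m}$ are integers, and likewise $(m+k)!\,m!$ and $(2m)!\,k!$ are integers. I do not expect any obstacle: the lemma is a bookkeeping identity that cleanly repackages the factorial divisibility into the binomial form used in the subsequent $p$-adic analysis via Kummer's theorem.
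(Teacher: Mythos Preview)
Your proof is correct and matches the paper's approach: the paper simply records the lemma as ``Immediate,'' and your expansion of the two binomial coefficients followed by cross-multiplication is exactly the routine verification behind that word.
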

\begin{proof}
Immediate.
\end{proof}

Fix a prime $p$. Define
\[
\kappa_p(m)\ :=\ \nu_p\!\left(\binom{2m}{m}\right),
\qquad
W_p(m,k)\ :=\ \nu_p\!\Big(\mathbb{P}od_{i=1}^k (m+i)\Big),
\qquad
V_p(m,k)\ :=\ \max_{1\le i\le k}\ \nu_p(m+i).
\]
Because
\(
\binom{m+k}{k}=\dfrac{\mathbb{P}od_{i=1}^k (m+i)}{k!},
\)
we have the exact identity
\begin{equation}\label{eq:val-reduction}
\nu_p\!\left(\binom{m+k}{k}\right)\ =\ W_p(m,k)-\nu_p(k!).
\end{equation}

\begin{lemma}[Valuation reduction]\label{lem:val-reduction}
For all primes $p$ and all $m,k$,
\[
\nu_p\!\left(\binom{m+k}{k}\right)\le \kappa_p(m)
\quad\Longleftrightarrow\quad
W_p(m,k)\le \kappa_p(m)+\nu_p(k!).
\]
\end{lemma}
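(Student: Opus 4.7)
The plan is essentially a one-line algebraic rearrangement built on identity~\eqref{eq:val-reduction}. Recall that \eqref{eq:val-reduction} gives the exact equality
\[
\nu_p\!\left(\binom{m+k}{k}\right)\ =\ W_p(m,k)-\nu_p(k!),
\]
which follows immediately from the definitions of $W_p$ and $\nu_p(k!)$ applied to the factorization $\binom{m+k}{k}=\prod_{i=1}^k(m+i)/k!$. The lemma will follow by substituting this equality into the left-hand inequality and then adding $\nu_p(k!)$ to both sides.

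Concretely, I would proceed as follows. First, assume $\nu_p\binom{m+k}{k}\le \kappa_p(m)$. Using~\eqref{eq:val-reduction}, rewrite the left-hand side as $W_p(m,k)-\nu_p(k!)$, then add $\nu_p(k!)$ to both sides to obtain $W_p(m,k)\le \kappa_p(m)+\nu_p(k!)$. For the converse, I would reverse the steps: starting from $W_p(m,k)\le \kappa_p(m)+\nu_p(k!)$, subtract $\nu_p(k!)$ from both sides (this subtraction is legitimate in $\mathbb{Z}$ and, since $\binom{m+k}{k}$ is an integer and hence $\nu_p\binom{m+k}{k}\ge 0$, the resulting $p$-adic valuations are nonnegative), and apply~\eqref{eq:val-reduction} in the other direction. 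There is no real obstacle here: the statement is a cosmetic repackaging of~\eqref{eq:val-reduction} whose only content is to isolate the term $\nu_p(k!)$ on the right-hand side, where it will play a crucial role in the subsequent analysis of small primes.
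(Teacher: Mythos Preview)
Your proposal is correct and follows exactly the paper's approach: the paper's proof is simply ``Immediate from \eqref{eq:val-reduction},'' and your argument spells out that immediate algebraic rearrangement. There is nothing to add.
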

\begin{proof}
Immediate from \eqref{eq:val-reduction}.
\end{proof}

Thus, it suffices to show $W_p(m,k)\le \kappa_p(m)+\nu_p(k!)$ for every prime $p$.

\begin{remark}
The dominant contribution to $W_p(m,k)$ for small primes is typically $\sim k/(p-1)$,
but \eqref{eq:val-reduction} subtracts $\nu_p(k!)$, which has the \emph{same main term}. This makes it easier to establish the required inequality.
\end{remark}

\begin{lemma}[Kummer's theorem]\label{lem:kummer}
For a prime $p$, $\kappa_p(m)$ equals the number of carries when adding $m+m$ in base $p$.
\end{lemma}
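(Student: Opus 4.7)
The plan is to derive Kummer's formula from Legendre's formula for $\nu_p(n!)$. First I would recall the identity
\[
\nu_p(n!) \;=\; \frac{n - s_p(n)}{p-1},
\]
where $s_p(n)$ denotes the sum of the base-$p$ digits of $n$; this follows by summing $\nu_p(n!) = \sum_{j\ge 1}\lfloor n/p^j\rfloor$ and telescoping against the base-$p$ expansion of $n$. Applied to $\binom{2m}{m} = (2m)!/(m!)^2$, it yields
\[
\kappa_p(m) \;=\; \frac{(2m - s_p(2m)) - 2(m - s_p(m))}{p-1} \;=\; \frac{2\,s_p(m) - s_p(2m)}{p-1}.
\]
So the task reduces to showing $2\,s_p(m) - s_p(2m) = (p-1)\,c$, where $c$ is the number of carries produced when doubling $m$ in base $p$.

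Next I would verify this digit-sum / carry identity by a column-by-column accounting of the schoolbook addition $m+m$ in base $p$. In each column $j$, letting $d_j$ denote the $j$th digit of $m$ and $c_j\in\{0,1\}$ the incoming carry, the quantity $2d_j + c_j$ is either the output digit itself (when $2d_j + c_j < p$) or exceeds $p$ by the output digit (when a carry is produced, so the written digit is $2d_j + c_j - p$ and $c_{j+1}=1$). Each carry therefore removes $p$ from one column and adds $1$ to the next, net decreasing the combined digit sum by $p-1$. Summing over all columns gives
\[
2\,s_p(m) \;=\; s_p(2m) + (p-1)\,c.
\]
Substituting into the formula above identifies $\kappa_p(m)$ with $c$, proving the lemma.

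There is no genuine obstacle: Kummer's theorem is classical, and the only step requiring care is the bookkeeping in the second paragraph, where one must count each carry exactly once and charge the $p-1$ deficit to the correct pair of columns. This is routine by induction on the number of digit positions, or by directly unwinding the definition of addition in positional notation.
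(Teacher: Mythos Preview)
Your proof is correct. The paper does not supply its own proof of this lemma: it simply records the statement as the classical Kummer's theorem and, in the Lean correspondence table, points to the Mathlib library lemma \texttt{padicValNat\_choose}. Your derivation via Legendre's digit-sum formula $\nu_p(n!)=(n-s_p(n))/(p-1)$ and the column-by-column carry accounting is the standard textbook route and is entirely adequate here.
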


We now obtain an intermediate bound which is useful in reducing the problem to a simpler inequality.

\begin{lemma}[Interval valuation bound]\label{lem:W-upper}
For every prime $p$ and all $m,k$,
\[
W_p(m,k)\ \le\ \nu_p(k!) + V_p(m,k).
\]
\end{lemma}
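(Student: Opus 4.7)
The plan is to bound $W_p(m,k)$ by counting multiples level-by-level. For each $j\ge 1$ set
\[
N_j \;:=\; \#\{i\in\{1,\ldots,k\} : p^j\mid m+i\}.
\]
Since $\nu_p(n)=\#\{j\ge 1 : p^j\mid n\}$ for any positive integer $n$, summing $\nu_p(m+i)$ over $i$ and swapping the order of summation gives
\[
W_p(m,k) \;=\; \sum_{j\ge 1} N_j.
\]
By the very definition of $V_p(m,k)$, no factor $m+i$ is divisible by $p^{V_p(m,k)+1}$, so $N_j=0$ for $j>V_p(m,k)$ and the sum truncates at $j=V_p(m,k)$.

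Next I would bound each $N_j$ using the elementary observation that a block of $k$ consecutive integers contains at most $\lfloor k/p^j\rfloor+1$ multiples of $p^j$. Concretely, writing $m=qp^j+r$ with $0\le r<p^j$ gives $N_j=\lfloor (r+k)/p^j\rfloor$, and maximizing over $r$ yields $N_j\le \lfloor (p^j-1+k)/p^j\rfloor \le \lfloor k/p^j\rfloor+1$.

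Combining these two steps with Legendre's formula $\nu_p(k!)=\sum_{j\ge 1}\lfloor k/p^j\rfloor$,
\[
W_p(m,k)\;\le\;\sum_{j=1}^{V_p(m,k)}\bigl(\lfloor k/p^j\rfloor+1\bigr)\;\le\;V_p(m,k)+\nu_p(k!),
\]
which is the desired inequality. There is no genuine obstacle here; the argument is elementary bookkeeping linking Legendre's formula to the count of multiples of $p^j$ in an interval. The only point deserving care is that the truncation of the sum at $V_p(m,k)$ is what produces the additive term $V_p(m,k)$ rather than an unbounded sum of $+1$'s.
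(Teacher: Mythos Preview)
Your proof is correct and follows essentially the same route as the paper: decompose $W_p(m,k)=\sum_{j\ge 1}N_j$, bound each $N_j$ by $\lfloor k/p^j\rfloor+1$, truncate at $j=V_p(m,k)$, and identify the floor sum with $\nu_p(k!)$ via Legendre. Your version is in fact slightly cleaner than the paper's, which splits the sum at $J=\lfloor\log_p k\rfloor$; you avoid this by simply noting $\sum_{j=1}^{V_p(m,k)}\lfloor k/p^j\rfloor\le\sum_{j\ge 1}\lfloor k/p^j\rfloor$.
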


\begin{proof}
For $j\ge 1$ let
\[
N_j := \#\{1\le i\le k:\ p^j\mid (m+i)\}.
\]
Then
\[
W_p(m,k)=\sum_{i=1}^k \nu_p(m+i)=\sum_{j\ge 1} N_j,
\]
since each term $(m+i)$ contributes $1$ to $N_j$.

Among $k$ consecutive integers, the number of multiples of $p^j$ is at most $\lceil k/p^j\rceil$, so
\[
N_j \le \left\lceil \frac{k}{p^j}\right\rceil\qquad (j\ge 1).
\]

Let $J:=\lfloor \log_p k\rfloor$, so $p^J\le k<p^{J+1}$. For $1\le j\le J$ we have
$\lceil k/p^j\rceil\le \lfloor k/p^j\rfloor+1$. For $j\ge J+1$ we have $p^j>k$, hence among $m+1,\dots,m+k$ there is at most one multiple of $p^j$, so $N_j\le 1$. Finally $N_j=0$ for all $j>V_p(m,k)$. Therefore
\[
W_p(m,k)=\sum_{j\ge 1}N_j
\le \sum_{j=1}^{J}\left(\left\lfloor \frac{k}{p^j}\right\rfloor+1\right) + \sum_{j=J+1}^{V_p(m,k)} 1
= \sum_{j=1}^{J}\left\lfloor \frac{k}{p^j}\right\rfloor + V_p(m,k).
\]
By Legendre's formula, $\nu_p(k!) = \sum_{j=1}^{J}\lfloor k/p^j\rfloor$. This establishes the claim.
\end{proof}

Combining Lemma~\ref{lem:val-reduction} with Lemma~\ref{lem:W-upper}, it is enough to show
\[
V_p(m,k)\ \le\ \kappa_p(m)\qquad\text{for every prime $p$}.
\]

\section{Prime-by-prime analysis via carries}\label{sec:analysis}

We now analyze the inequality $V_p(m,k)\le \kappa_p(m)$ in ranges $p>2k$ and $p\le 2k$.

\subsection{The range $p>2k$}
In this range, the desired inequality holds for free.

\begin{lemma}[Large prime lemma]\label{lem:large-primes}
If $p$ is prime and $p>2k$, then for all $m$,
\[
\kappa_p(m)\ \ge\ W_p(m,k) = V_p(m,k).
\]
\end{lemma}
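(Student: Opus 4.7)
The plan has two steps, corresponding to the two claims in the statement: first the equality $W_p(m,k)=V_p(m,k)$, then the inequality $\kappa_p(m)\ge V_p(m,k)$.

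For the equality, I would use that $k$ consecutive integers $m+1,\dots,m+k$ contain at most one multiple of $p$ whenever $k<p$; in our regime $k<p/2<p$, so this is automatic. If $V_p(m,k)=0$ there is nothing to prove; otherwise let $i_0$ be the unique index with $p\mid m+i_0$. All other $m+i$ satisfy $\nu_p(m+i)=0$, so $W_p(m,k)=\nu_p(m+i_0)=V_p(m,k)$.

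For the inequality, let $J:=V_p(m,k)=\nu_p(m+i_0)\ge 1$, so $m\equiv -i_0\pmod{p^J}$. Since $1\le i_0\le k<p/2$, we have $i_0<p$, so $i_0$ is a single base-$p$ digit and $2i_0<p$. I would then write out the base-$p$ expansion of $m\bmod p^J=p^J-i_0$ explicitly: its digit at position $0$ is $p-i_0$, and its digits at positions $1,\dots,J-1$ are all $p-1$. Now I double these digits, tracking the carries. At position $0$, the sum is $2(p-i_0)=2p-2i_0\ge p$ (using $2i_0<p$), producing a carry. At each subsequent position $1,\dots,J-1$, the digit is $p-1$ and there is a carry-in of $1$ from below, giving $2(p-1)+1=2p-1\ge p$, another carry. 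Thus doubling the bottom $J$ digits of $m$ creates carries at positions $0,1,\dots,J-1$, and these carries persist regardless of the higher digits of $m$. By Lemma~\ref{lem:kummer}, $\kappa_p(m)\ge J=V_p(m,k)$.

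There is no real obstacle here; the proof is a short base-$p$ calculation. The only point that requires care is the double role of the hypothesis $p>2k$: the first half ($p>k$) isolates a single multiple of $p$ in the window and collapses $W_p$ to $V_p$, while the second half ($p>2i_0$, i.e., $2i_0<p$) ensures that doubling the lowest digit $p-i_0$ already overflows and thereby primes the cascade of $J$ successive carries.
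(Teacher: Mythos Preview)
Your proof is correct and matches the paper's argument essentially line for line: both collapse $W_p$ to $V_p$ via the uniqueness of a multiple of $p$ in a window of length $k<p$, then read off the bottom $J$ base-$p$ digits of $m$ as $(p-i_0,p-1,\dots,p-1)$ and observe that each forces a carry when doubling. The only cosmetic difference is that the paper phrases the carry step as ``a digit $a\ge(p+1)/2$ forces a carry regardless of incoming carry,'' whereas you explicitly track the carry propagation; the content is identical.
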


\begin{proof}
Because $p>k$, at most one of the $k$ integers $m+1,\dots,m+k$ can be divisible by $p$;
more generally, for each $j\ge 1$, at most one of $m+1,\dots,m+k$ can be divisible by $p^j$. Consequently, $W_p(m,k)=V_p(m,k)$.

If $V_p(m,k)=0$ then the claim is trivial. Otherwise let $J:=V_p(m,k)\ge 1$
and pick $i\in\{1,\dots,k\}$ such that $p^J\mid (m+i)$.
Write $m+i=p^J u$ with $p\nmid u$.
Since $i\le k < p/2$, in base $p$, the lowest $J$ digits of $m$
agree with those of $p^J-i$.
But $p^J-i$ has base-$p$ expansion with the lowest digit equal to $p-i$ and the next $J-1$ digits equal to $p-1$.
All these digits are $\ge (p+1)/2$. In the addition $m+m$ in base $p$, a digit $a\ge (p+1)/2$ forces a carry at that position regardless of incoming carry. Therefore the lowest $J$ digit positions contribute at least $J$ carries. By Kummer's theorem (Lemma~\ref{lem:kummer}), $\kappa_p(m)$ equals the total number of carries, so $\kappa_p(m)\ge J$.
\end{proof}

\subsection{The range $p\le 2k$}

We will enforce a carry lower bound by inspecting the first few base-$p$ digits of $m$.
To make a counting argument on $m\in[M,2M]$ work cleanly, we choose a digit depth $L_p$ so that $p^{L_p}$ is slightly smaller than $M$. Concretely, set
\[
\eta:=\frac{1}{10},\qquad
L_p:=\left\lfloor \frac{(1-\eta)\log M}{\log p}\right\rfloor.
\]
Among residues modulo $p^{L_p}$, the base-$p$ digits are uniform in $\{0,1,\dots,p-1\}$.
A digit is at least $\lceil p/2\rceil$ with probability
\[
\theta(p):=
\begin{cases}
\frac12,& p=2,\\[2pt]
\frac{p-1}{2p},& p\ge3,
\end{cases}
\]
so the expected number of large digits among the first $L_p$ digits is
\[
\mu_p:=L_p\,\theta(p).
\]

\subsubsection{Carry lower bound and spike control}

Let $X_p(m)$ be the number of the first $L_p$ base-$p$ digits of $m$ which are $\ge \lceil p/2\rceil$.

\begin{lemma}[Forced carries from large digits]\label{lem:forced-carries-small}
For every prime $p$ and every $m$,
\[
\kappa_p(m)\ \ge\ X_p(m).
\]
\end{lemma}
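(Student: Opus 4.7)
The plan is to use Kummer's theorem (Lemma~\ref{lem:kummer}) to interpret $\kappa_p(m)$ as the total number of carries when $m$ is added to itself in base $p$, and then to produce $X_p(m)$ distinct forced carries, one for each of the first $L_p$ digit positions where $m$ has a digit $\ge \lceil p/2\rceil$.

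Concretely, I would write $m=\sum_{i\ge 0} d_i p^i$ with digits $d_i\in\{0,1,\dots,p-1\}$, and track the carries $c_0,c_1,\dots\in\{0,1\}$ produced by the standard grade-school addition of $m$ to itself: the outgoing carry at position $i$ is $c_{i+1}=\lfloor (2d_i+c_i)/p\rfloor$ with $c_0=0$. The key observation, already exploited at the tail end of the proof of Lemma~\ref{lem:large-primes}, is that whenever $d_i\ge \lceil p/2\rceil$ one has $2d_i\ge 2\lceil p/2\rceil\ge p$, so $2d_i+c_i\ge p$ regardless of the value of the incoming carry $c_i$, and therefore $c_{i+1}=1$.

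Thus each of the (at most $L_p$) positions $i\in\{0,1,\dots,L_p-1\}$ at which $d_i\ge \lceil p/2\rceil$ contributes one carry into position $i+1$. Since these positions are all distinct, the forced carries are pairwise distinct, and summing them yields at least $X_p(m)$ carries in the full addition $m+m$. Kummer's theorem then gives $\kappa_p(m)\ge X_p(m)$, as required. The case $p=2$, which at first glance looks borderline because $\lceil p/2\rceil=1$ and $2d_i=2=p$ produces a carry with zero output digit, is handled by exactly the same inequality $2d_i+c_i\ge p$ and does not need separate treatment.

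There is no serious obstacle here: the argument is a short digit-level observation, and the only thing to double-check is that the ``large digit forces a carry'' condition is independent of the incoming carry, so that the $X_p(m)$ forced carries cannot interfere with or cancel one another. That independence is visible directly from the inequality $2d_i\ge p$, so the proof is essentially a one-liner after invoking Kummer.
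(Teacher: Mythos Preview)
Your proposal is correct and follows essentially the same approach as the paper's proof: both write $m$ in base $p$, observe that a digit $d_i\ge\lceil p/2\rceil$ forces $2d_i+c_i\ge p$ and hence an outgoing carry regardless of the incoming carry, and conclude via Kummer's theorem that the total carry count $\kappa_p(m)$ is at least $X_p(m)$. Your additional remarks on distinctness of the forced carries and the $p=2$ edge case are sound but not strictly needed beyond what the paper already does implicitly.
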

\begin{proof}
Write $m$ in base $p$ as $m=\sum_{j\ge 0} a_j p^j$ with $0\le a_j<p$.
When adding $m+m$ in base $p$, the digit at position $j$ is obtained from $a_j+a_j$ plus an incoming carry $c_{j-1}\in\{0,1\}$.
If $a_j\ge \lceil p/2\rceil$ then a carry must occur at position $j$.
Therefore, each of the $X_p(m)$ digit positions counted by $X_p(m)$ produces a carry.
\end{proof}

Recall $V_p(m,k)=\max_{1\le i\le k}\nu_p(m+i)$. Typically $V_p(m,k)$ is close to $\log_p k$, but it can be larger if one of the integers $m+i$
happens to be divisible by a high power $p^J$.
Such events are rare because they force $m$ into a single residue class modulo $p^J$.
We set
\[
J_p:=\lfloor \log_p k\rfloor,\qquad t(M):=\left\lceil 10\log\log M\right\rceil,
\]
and we will require the \emph{no-spike} condition $V_p(m,k)<J_p+t(M)$.
On the other hand, we will also require the \emph{carry} condition $X_p(m)\ge \mu_p/2$.

The following lemma relates these conditions.

\begin{lemma}[Threshold inequality]\label{lem:weak-threshold}
There exists $M_0$ such that for all $M\ge M_0$ and all primes $p\le 2k$,
\[
\frac{\mu_p}{2}\ \ge\ J_p+t(M)+3.
\]
\end{lemma}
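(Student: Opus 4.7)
The plan is straightforward: show that $\mu_p/2$ grows like $\log M/\log\log M$ uniformly over primes $p\le 2k$, whereas $J_p+t(M)+3$ is only $O(\log\log M)$. Since $k=\lfloor c\log M\rfloor$, the constraint $p\le 2k$ forces $\log p=O(\log\log M)$, which makes $L_p$, and hence $\mu_p$, nearly linear in $\log M/\log\log M$; meanwhile $J_p\le \log_2 k$ is tame and $t(M)$ is $O(\log\log M)$ by definition. No deep obstacle is expected; the only care needed is to make the lower bound on $\mu_p$ uniform over $p\le 2k$, which I would do by extracting a universal lower bound on $\theta(p)$.

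For the lower bound on $\mu_p=L_p\,\theta(p)$, I would first observe that $\theta(p)\ge 1/3$ for every prime $p$: this is immediate from $\theta(2)=1/2$ and $\theta(p)=(p-1)/(2p)=1/2-1/(2p)\ge 1/3$ for $p\ge 3$. Next, the floor definition of $L_p$ gives $L_p\ge \frac{(1-\eta)\log M}{\log p}-1$, and since $p\le 2k\le 2c\log M+O(1)$ we have $\log p\le \log\log M+O(1)$, with implicit constant depending only on $c$. Combining,
\[
\mu_p \;\ge\; \tfrac{1}{3}\!\left(\frac{0.9\log M}{\log\log M+O(1)}-1\right),
\]
which is $\Omega(\log M/\log\log M)$ uniformly in $p\le 2k$.

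For the upper bound on the right-hand side, I would use $J_p=\lfloor\log_p k\rfloor\le \log_2 k\le \log_2(c\log M)=O(\log\log M)$, together with $t(M)\le 10\log\log M+1$ directly from the definition, to conclude $J_p+t(M)+3=O(\log\log M)$. Comparing the two bounds, the ratio of my lower bound on $\mu_p/2$ to my upper bound on $J_p+t(M)+3$ grows like $\log M/(\log\log M)^2\to\infty$, so there exists a threshold $M_0$ (depending on $c$ and $\eta=1/10$ only) beyond which $\mu_p/2\ge J_p+t(M)+3$ holds simultaneously for every prime $p\le 2k$, as required.
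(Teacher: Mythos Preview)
Your proof is correct and follows essentially the same approach as the paper: you use the universal bound $\theta(p)\ge 1/3$ and the floor estimate $L_p\ge (1-\eta)\log M/\log p-1$ together with $p\le 2k\le 2c\log M$ to get $\mu_p/2\gg \log M/\log\log M$, and then bound $J_p+t(M)+3=O(\log\log M)$, exactly as the paper does. The only cosmetic difference is that you bound $J_p$ by $\log_2 k$ rather than $\log_p k$, which is harmless.
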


\begin{proof}
We have $\theta(p)\ge 1/3$ for all primes. Thus
\[
\frac{\mu_p}{2}\ge \frac{L_p}{6} \ge \frac{1}{6}\left(\frac{(1-\eta)\log M}{\log p}-1\right).
\]
Since $p\le 2k\le 2c\log M$,
\[
\frac{\mu_p}{2}\ \ge\ \frac{(1-\eta)\log M}{6\log(2c\log M)} - \frac16.
\]
The right-hand side is $\asymp \log M/\log\log M$,
while $J_p\le \log_p k \ll \log\log M$ and $t(M)\ll \log\log M$.
Hence the inequality holds for all large $M$.
\end{proof}

Now assume $M$ is large enough that Lemma~\ref{lem:weak-threshold} holds.
Suppose $m\in[M,2M]$ satisfies the two conditions
\[
X_p(m)\ \ge\ \frac{\mu_p}{2}
\qquad\text{and}\qquad
V_p(m,k)\ <\ J_p+t(M).
\]
By Lemma~\ref{lem:forced-carries-small}, we have a chain of inequality resulting in $V_p(m,k)\le \kappa_p(m)$, as desired.

Thus, we need to find such $m$, which we will call \emph{good for $p$}.
In the next part we show that for large $M$ there exists an $m\in[M,2M]$ that is good for all primes $p\le 2k$ simultaneously.

\subsubsection{Existence of a good $m$}\label{sec:count}

This will be a probabilistic argument. We first define ``bad'' events to avoid.

\begin{definition}[Bad carry]\label{def:badcarry}
For a prime $p\le 2k$, define $\mathrm{BadCarry}_p(M)$ to be the set of $m\in[M,2M]\cap\bbN$ such that
\[
X_p(m) < \mu_p/2.
\]
\end{definition}

\begin{definition}[Bad spike]\label{def:badspike}
For a prime $p\le 2k$, define $\mathrm{BadSpike}_p(M)$ to be the set of $m\in[M,2M]\cap\bbN$ such that
\[
V_p(m,k)\ge J_p+t(M).
\]
\end{definition}

Let $\mathrm{Bad}(M)$ be the union of these sets:
\[
\mathrm{Bad}(M):=\bigcup_{p\le 2k\ \mathrm{prime}}\big(\mathrm{BadCarry}_p(M)\cup \mathrm{BadSpike}_p(M)\big).
\]
The target is to show $|\mathrm{Bad}(M)|<M+1$ for large $M$.

Both bad events can be described as unions of residue classes modulo a power of $p$:
for carries the relevant modulus is $p^{L_p}$, while for spikes it is $p^{J_p+t(M)}$.

The key building block is the following lemma.

\begin{lemma}[Residue-class counting]\label{lem:residue-uniform}
Let $Q\ge 1$ and $S\subset \bbZ/Q\bbZ$. Then
\[
\#\{m\in[M,2M]\cap\bbN:\ m\bmod Q\in S\}
\le
|S|\left(\frac{M+1}{Q}+2\right).
\]
\end{lemma}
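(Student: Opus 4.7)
The plan is to handle each residue class separately and then sum. For a fixed $s \in \mathbb{Z}/Q\mathbb{Z}$, I would count how many $m \in [M, 2M] \cap \mathbb{N}$ satisfy $m \equiv s \pmod{Q}$: such integers form an arithmetic progression with common difference $Q$ contained in an interval containing $M+1$ consecutive integers, so the number of them is at most $\lceil (M+1)/Q\rceil$. Using $\lceil x \rceil \le x + 1$, this is at most $(M+1)/Q + 1$.

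Next, I would take a union bound over all $s \in S$: since the events ``$m \bmod Q = s$'' are disjoint as $s$ ranges over $S$, we get
\[
\#\{m\in[M,2M]\cap\bbN:\ m\bmod Q\in S\} \;=\; \sum_{s\in S}\#\{m\in[M,2M]\cap\bbN:\ m\equiv s\ (\mathrm{mod}\ Q)\} \;\le\; |S|\left(\frac{M+1}{Q}+1\right).
\]
Since $1 \le 2$, this is bounded by $|S|((M+1)/Q + 2)$, giving the claimed inequality.

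There is no real obstacle here; the argument is a direct box-counting estimate on the number of terms of an arithmetic progression in a bounded interval. The only small subtlety is being careful that $[M,2M]$ contains $M+1$ integers (not $M$), and that the ceiling $\lceil (M+1)/Q \rceil$ may exceed $(M+1)/Q$ by up to $1$; the extra slack of $+2$ (rather than the sharper $+1$) in the statement comfortably absorbs both of these losses and makes the bound uniform in $Q$, including the trivial cases $Q=1$ or $Q > M+1$.
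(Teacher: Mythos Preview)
Your proof is correct and follows essentially the same approach as the paper: bound each residue class separately and sum over $S$. In fact, your per-class bound $\lceil (M+1)/Q\rceil \le (M+1)/Q + 1$ is slightly sharper than the paper's $\lceil (M+1)/Q\rceil + 1 \le (M+1)/Q + 2$, but the argument is otherwise identical.
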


\begin{proof}
Each residue class modulo $Q$ occurs at most $\lceil (M+1)/Q\rceil+1\le (M+1)/Q+2$ times in the interval.
Summing over the $|S|$ classes gives the bound.
\end{proof}

\begin{lemma}[Uniformity consequence]\label{lem:mod-uniform}
Assume $Q\le M^{1-\eta}$ and let $S\subset \bbZ/Q\bbZ$.
Then
\[
\frac{1}{M+1}\#\{m\in[M,2M]\cap\bbN:\ m\bmod Q\in S\}
\ \le\ \frac{|S|}{Q} + \frac{2}{M^{\eta}}.
\]
\end{lemma}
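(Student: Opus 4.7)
The plan is to derive this estimate as an essentially immediate consequence of Lemma~\ref{lem:residue-uniform}, turning the pointwise counting bound into a density statement by dividing by $M+1$ and then absorbing the error term using the hypothesis $Q\le M^{1-\eta}$. First I would invoke Lemma~\ref{lem:residue-uniform} to obtain
\[
\#\{m\in[M,2M]\cap\bbN:\ m\bmod Q\in S\}\ \le\ |S|\!\left(\frac{M+1}{Q}+2\right),
\]
and then divide through by $M+1$ to get the decomposition
\[
\frac{1}{M+1}\#\{m\in[M,2M]\cap\bbN:\ m\bmod Q\in S\}\ \le\ \frac{|S|}{Q}+\frac{2|S|}{M+1}.
\]
The first term is already the main term appearing in the target inequality, so the remaining task is to show the error term $2|S|/(M+1)$ is bounded by $2/M^{\eta}$.

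For that bound I would use the two trivial facts $|S|\le Q$ (since $S\subset \bbZ/Q\bbZ$) and the standing hypothesis $Q\le M^{1-\eta}$, which combine to give $|S|\le M^{1-\eta}$. Then
\[
\frac{2|S|}{M+1}\ \le\ \frac{2M^{1-\eta}}{M+1}\ \le\ \frac{2M^{1-\eta}}{M}\ =\ \frac{2}{M^{\eta}},
\]
which yields the claimed inequality. There is no real obstacle here; the only mild bookkeeping point is ensuring that the $M+1$ in the denominator (coming from the averaging normalization) is compatible with the $M^{\eta}$ in the target (coming from the hypothesis on $Q$), and this is handled by the elementary bound $M+1\ge M$.
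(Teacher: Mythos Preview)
Your proposal is correct and follows exactly the paper's approach: divide the bound of Lemma~\ref{lem:residue-uniform} by $M+1$ and use $|S|\le Q$ together with $Q\le M^{1-\eta}$ to control the error term. You have simply written out in full the details that the paper compresses into one sentence.
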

\begin{proof}
Divide the bound of Lemma~\ref{lem:residue-uniform} by $M+1$ and use $|S|\le Q$.
\end{proof}

\begin{remark}
In this writeup we never use Lemma \ref{lem:mod-uniform}; we use Lemma \ref{lem:residue-uniform} directly. But it is recorded here because the Lean file \cite{AlexeevLean} utilizes it.
\end{remark}

The event $m\in\mathrm{BadCarry}_p(M)$ depends only on the residue of $m$ modulo $p^{L_p}$.
Write residues $r\bmod p^{L_p}$ in base $p$:
$r=\sum_{j=0}^{L_p-1} a_j p^j$ with $0\le a_j<p$.
Let $\xi_j(r)=1$ iff $a_j\ge \lceil p/2\rceil$, and set $X(r):=\sum_{j=0}^{L_p-1}\xi_j(r)$.
We have that $X(r)$ is exactly a binomial variable
$\mathrm{Bin}(L_p,\theta(p))$ with mean $\mu_p=L_p\theta(p)$.

\begin{lemma}[Chernoff lower tail]\label{lem:chernoff}
If $X\sim \mathrm{Bin}(L,q)$ has mean $\mu=Lq$, then
\[
\mathbb{P}(X\le \mu/2)\ \le\ e^{-\mu/8}.
\]
\end{lemma}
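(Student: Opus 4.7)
The plan is the standard exponential-moment (Chernoff) method applied to the lower tail. Write $X=\sum_{i=1}^{L}X_i$ as a sum of independent Bernoulli$(q)$ variables. For every $t>0$, Markov's inequality applied to $e^{-tX}$ yields
\[
\mathbb{P}(X\le \mu/2)\ \le\ e^{t\mu/2}\,\mathbb{E}[e^{-tX}]\ =\ e^{t\mu/2}\bigl(1-q(1-e^{-t})\bigr)^{L},
\]
since $\mathbb{E}[e^{-tX_i}]=1-q+qe^{-t}=1-q(1-e^{-t})$. Applying the elementary inequality $1-x\le e^{-x}$ to the bracketed factor and using $\mu=Lq$, the bound becomes $\exp\!\bigl(\mu\cdot f(t)\bigr)$ where $f(t):=t/2-1+e^{-t}$.

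Next I would optimize $f$ over $t>0$. The derivative $f'(t)=1/2-e^{-t}$ vanishes at $t=\ln 2$, at which $f(\ln 2)=(\ln 2 - 1)/2 \approx -0.1534$. Since $(\ln 2 - 1)/2 < -1/8$ (equivalently $\ln 2 < 3/4$, an elementary numerical fact), this gives $\mathbb{P}(X\le \mu/2)\le \exp(\mu f(\ln 2))\le e^{-\mu/8}$, as required.

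An equivalent and essentially identical route is to quote the classical multiplicative Chernoff inequality $\mathbb{P}(X\le (1-\delta)\mu)\le e^{-\delta^{2}\mu/2}$ for $0<\delta\le 1$ and specialize to $\delta=1/2$; this hits the advertised constant $1/8$ exactly. That keeps the argument to a single line but substitutes a named inequality for the self-contained computation above.

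I do not anticipate any genuine obstacle; the proof is a textbook Chernoff computation. The only mild subtlety is securing the specific constant $1/8$ (rather than some weaker absolute constant): a naive choice such as $t=1$ gives a suboptimal exponent, so one either optimizes and arrives at $t=\ln 2$, or invokes the quadratic form of the Chernoff bound with $\delta=1/2$.
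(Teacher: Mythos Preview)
Your proposal is correct and matches the paper's approach: the paper simply cites the standard multiplicative Chernoff bound $\mathbb{P}(X\le(1-\delta)\mu)\le e^{-\delta^2\mu/2}$ and specializes $\delta=1/2$, which is exactly the one-line route you describe at the end. Your additional self-contained exponential-moment computation (optimizing at $t=\ln 2$) is a valid unpacking of that same inequality and yields a slightly better constant before relaxing to $1/8$.
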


\begin{proof}
This is the standard Chernoff inequality $\mathbb{P}(X\le (1-\delta)\mu)\le e^{-\delta^2\mu/2}$ with $\delta=1/2$.
\end{proof}

Let $\mathcal{R}_p(M)\subset \bbZ/p^{L_p}\bbZ$ be the set of residues with $X(r)\le \mu_p/2$.
Lemma~\ref{lem:chernoff} gives $|\mathcal{R}_p(M)|\le p^{L_p}e^{-\mu_p/8}$.
Since $\mathrm{BadCarry}_p(M)$ is exactly the set of $m$ whose residue mod $p^{L_p}$ lies in $\mathcal{R}_p(M)$, we have the following bound.

\begin{lemma}[Bad-carry count]\label{lem:badcarry-count}
For each prime $p\le 2k$,
\[
|\mathrm{BadCarry}_p(M)|
\ \le\
(M+1)e^{-\mu_p/8}+2p^{L_p}.
\]
\end{lemma}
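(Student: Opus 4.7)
The plan is to combine the pieces already laid out in the text preceding the lemma: the event $\mathrm{BadCarry}_p(M)$ is determined by $m \bmod p^{L_p}$, so it is the preimage of a set $\mathcal{R}_p(M) \subset \bbZ/p^{L_p}\bbZ$ under reduction; the Chernoff estimate in Lemma~\ref{lem:chernoff} bounds $|\mathcal{R}_p(M)| \le p^{L_p} e^{-\mu_p/8}$; and Lemma~\ref{lem:residue-uniform} converts a residue-class description into an interval count. Putting these together is essentially a one-line assembly.

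Concretely, I would apply Lemma~\ref{lem:residue-uniform} with $Q := p^{L_p}$ and $S := \mathcal{R}_p(M)$ to obtain
\[
|\mathrm{BadCarry}_p(M)| \;\le\; |\mathcal{R}_p(M)|\Bigl(\frac{M+1}{p^{L_p}}+2\Bigr),
\]
then distribute and insert $|\mathcal{R}_p(M)| \le p^{L_p} e^{-\mu_p/8}$. The first term becomes $(M+1) e^{-\mu_p/8}$ exactly, as the factor $p^{L_p}$ in the numerator cancels the denominator; the second term becomes $2 p^{L_p} e^{-\mu_p/8}$, which we weaken to $2 p^{L_p}$ using the trivial bound $e^{-\mu_p/8} \le 1$.

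There is no real obstacle here — once the residue-class reformulation of $\mathrm{BadCarry}_p(M)$ and the Chernoff bound of the preceding paragraphs are in hand, the argument is a short chain of inequalities. The only minor cosmetic choice is whether to retain the extra factor $e^{-\mu_p/8}$ in the second summand; we discard it to match the clean form stated in the lemma, which is the version used in the subsequent union bound over primes $p \le 2k$.
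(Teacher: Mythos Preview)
Your proposal is correct and matches the paper's proof essentially line for line: apply Lemma~\ref{lem:residue-uniform} with $Q=p^{L_p}$ and $S=\mathcal{R}_p(M)$, insert $|\mathcal{R}_p(M)|\le p^{L_p}e^{-\mu_p/8}$, distribute, and drop the factor $e^{-\mu_p/8}$ in the second term.
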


\begin{proof}
Apply Lemma~\ref{lem:residue-uniform} with $Q=p^{L_p}$ and $S=\mathcal{R}_p(M)$:

\[
|\mathrm{BadCarry}_p(M)|
\le p^{L_p}e^{-\mu_p/8}\left(\frac{M+1}{p^{L_p}}+2\right)
\le (M+1)e^{-\mu_p/8}+2p^{L_p}.
\]

\end{proof}

We now consider the spike condition. If $V_p(m,k)$ is large, then for some $i\le k$ the integer $m+i$ is divisible by a high power of $p$.

\begin{lemma}[Bad-spike count]\label{lem:badspike-count}
For each prime $p\le 2k$,
\[
|\mathrm{BadSpike}_p(M)|
\ \le\
k\left(\frac{M+1}{p^{J_p+t(M)}}+2\right).
\]
In particular, since $p^{J_p}\le k<p^{J_p+1}$,
\[
|\mathrm{BadSpike}_p(M)|\ \le\ (M+1)\,p^{1-t(M)} + 2k.
\]
\end{lemma}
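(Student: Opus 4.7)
The plan is to mirror the strategy of Lemma~\ref{lem:badcarry-count}: describe $\mathrm{BadSpike}_p(M)$ as a union of residue classes to a single modulus, bound the number of such classes, and then apply Lemma~\ref{lem:residue-uniform}.

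First I would unfold the definition: $m \in \mathrm{BadSpike}_p(M)$ iff $\max_{1\le i\le k}\nu_p(m+i) \ge J_p+t(M)$, which is equivalent to the existence of some $i\in\{1,\dots,k\}$ with $p^{J_p+t(M)}\mid (m+i)$. For each fixed $i$, this is a single residue class modulo $Q := p^{J_p+t(M)}$, namely $m \equiv -i \pmod{Q}$. Taking the union over $i=1,\dots,k$ gives at most $k$ residue classes modulo $Q$; let $S \subset \mathbb{Z}/Q\mathbb{Z}$ be this set, so $|S|\le k$.

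Next I would invoke Lemma~\ref{lem:residue-uniform} with this choice of $Q$ and $S$, yielding
\[
|\mathrm{BadSpike}_p(M)| \ \le\ |S|\!\left(\frac{M+1}{Q}+2\right)\ \le\ k\!\left(\frac{M+1}{p^{J_p+t(M)}}+2\right),
\]
which is the first displayed bound.

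Finally, to obtain the second bound, I would use the defining inequality $p^{J_p}\le k<p^{J_p+1}$. The first term then satisfies
\[
k\cdot\frac{M+1}{p^{J_p+t(M)}}\ <\ p^{J_p+1}\cdot\frac{M+1}{p^{J_p+t(M)}}\ =\ (M+1)\,p^{1-t(M)},
\]
while the $2k$ contribution from the ``$+2$'' term is left as is, giving $(M+1)p^{1-t(M)}+2k$. There is no real obstacle here; the only point to watch is that we are applying Lemma~\ref{lem:residue-uniform} with one combined modulus $Q=p^{J_p+t(M)}$ rather than unioning $k$ separate applications, so that the additive ``$+2$'' loss is paid only once per class, giving $2k$ in total rather than anything worse.
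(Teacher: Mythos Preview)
Your proof is correct and essentially identical to the paper's: both unfold the spike condition to $m\equiv -i\pmod{p^{J_p+t(M)}}$ for some $i\le k$, invoke Lemma~\ref{lem:residue-uniform} with $Q=p^{J_p+t(M)}$, and then use $k<p^{J_p+1}$ for the second bound. The only cosmetic difference is that the paper applies Lemma~\ref{lem:residue-uniform} $k$ times with singleton $S=\{-i\}$ and sums, whereas you apply it once with $|S|\le k$; since the bound is linear in $|S|$, the outcomes coincide.
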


\begin{proof}
If $V_p(m,k)\ge J_p+t(M)$, then there exists $i\in\{1,\dots,k\}$ with $p^{J_p+t(M)}\mid(m+i)$, i.e.
$m\equiv -i\pmod{p^{J_p+t(M)}}$.
For a fixed $i$, Lemma~\ref{lem:residue-uniform} with $Q=p^{J_p+t(M)}$ and a singleton set $S=\{-i\}$ gives at most
$(M+1)/p^{J_p+t(M)}+2$ solutions in $[M,2M]$.
Summing over the $k$ choices of $i$ gives the first bound. The second bound is immediate.
\end{proof}

Now we show existence of a good $m$ via union bound.

\begin{lemma}[Bad set size]\label{lem:bad-small}
There exists $M_0$ such that for all $M\ge M_0$ one has $|\mathrm{Bad}(M)|<M+1$.
\end{lemma}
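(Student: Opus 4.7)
The plan is to combine Lemmas \ref{lem:badcarry-count} and \ref{lem:badspike-count} via a union bound over the primes $p \le 2k$, and to verify that each of the four resulting contributions is $o(M)$ as $M \to \infty$, so in particular eventually smaller than $M+1$. Explicitly,
\[
|\mathrm{Bad}(M)| \;\le\; \sum_{p\le 2k}\bigl[(M+1)e^{-\mu_p/8}+2p^{L_p}\bigr]+\sum_{p\le 2k}\bigl[(M+1)p^{1-t(M)}+2k\bigr].
\]
Since $k\asymp\log M$, the number of primes in the range is $\pi(2k)=O(\log M/\log\log M)$, which is easily absorbed.

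For the carry term, I would use $\theta(p)\ge 1/3$ together with the lower bound $L_p\ge (1-\eta)\log M/\log(2k)-1$; this gives $\mu_p\gg \log M/\log\log M$ \emph{uniformly} over $p\le 2k$, hence $e^{-\mu_p/8}\le \exp(-c\log M/\log\log M)$ for some $c>0$. Multiplying by $M+1$ and summing over the $O(\log M)$ primes, the total is still superpolynomially small times $M$, hence $o(M)$. For the ``modulus term'' $2p^{L_p}$, the definition of $L_p$ forces $p^{L_p}\le M^{1-\eta}$, so this sum is $O(M^{1-\eta}\log M)=o(M)$. For the spike term, the definition $t(M)=\lceil 10\log\log M\rceil$ and $p\ge 2$ give
\[
p^{1-t(M)} \;\le\; (2k)\cdot 2^{-t(M)} \;\le\; (2k)\cdot(\log M)^{-10\log 2},
\]
so after multiplying by $M+1$ and summing over the $O(\log M)$ primes, the total is $O\!\bigl(M\,(\log M)^{3-10\log 2}\bigr)=o(M)$ since $10\log 2>6$. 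The residual term $\sum_{p\le 2k}2k=O(k^2)=O((\log M)^2)$ is trivially $o(M)$.

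Putting these four bounds together gives $|\mathrm{Bad}(M)|=o(M)$, so for all sufficiently large $M$ we obtain $|\mathrm{Bad}(M)|<M+1$, as required. The main obstacle is the carry sum: one must verify that the exponent $\mu_p/8$ beats $\log\log M$ (the logarithm of the number of primes to union-bound over) uniformly in $p$, which is why the scale $L_p$ was calibrated with a definite savings $\eta>0$ in its definition. The remaining sums are robust and follow from the exponential decay built into $t(M)$ and the truncation in $L_p$.
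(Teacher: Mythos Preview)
Your proof is correct and follows essentially the same route as the paper: the identical four-term union bound from Lemmas~\ref{lem:badcarry-count} and~\ref{lem:badspike-count}, the uniform lower bound $\mu_p\gg \log M/\log\log M$ via $\theta(p)\ge 1/3$, the truncation $p^{L_p}\le M^{1-\eta}$, and the spike decay from $t(M)=\lceil 10\log\log M\rceil$. The only cosmetic differences are that the paper bounds $p^{1-t(M)}\le 2^{1-t(M)}$ directly (rather than your $p^{1-t(M)}\le (2k)2^{-t(M)}$), and your exponent $3-10\log 2$ should read $2-10\log 2$; neither affects the $o(M)$ conclusion.
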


\begin{proof}
We sum the bounds from Lemmas~\ref{lem:badcarry-count} and \ref{lem:badspike-count} over primes $p\le 2k$. Note that
\[
p^{L_p} \le M^{1-\eta}.
\]

For carry failures:
\[
\sum_{p\le 2k}|\mathrm{BadCarry}_p(M)|
\le (M+1)\sum_{p\le 2k}e^{-\mu_p/8} + 2\sum_{p\le 2k}p^{L_p}
\le (M+1)\cdot 2k\cdot \max_{p\le 2k} e^{-\mu_p/8} + 4k\,M^{1-\eta}.
\]
As in the proof of Lemma \ref{lem:weak-threshold},
\[
\mu_p \ge\ \frac{1}{3}\left(\frac{(1-\eta)\log M}{\log(2c\log M)}-1\right)=:\tilde{\mu}(M),
\]
uniformly for all $p\le 2k$.
Since $\tilde{\mu}(M)\asymp \log M/\log\log M$, we get $k\cdot e^{-\tilde{\mu}(M)/8}=o(1)$.
Thus the first term is $o(M)$. 
The second term is also easily $o(M)$.
So for large $M$ the carry sum is $<(M+1)/3$.

For spike failures:
\[
\sum_{p\le 2k}|\mathrm{BadSpike}_p(M)|
\le (M+1)\sum_{p\le 2k} p^{1-t(M)} + 2k\cdot \pi(2k)
\le (M+1)(2k)\cdot 2^{1-t(M)} + 4k^2,
\]
where $\pi(\cdot)$ is the prime counting function.
Because $t(M)=\lceil 10\log\log M\rceil$, the first term is $o(M)$.
Moreover, $k^2=o(M)$.
Thus for large $M$ the spike sum is also $<(M+1)/3$.

Therefore $|\mathrm{Bad}(M)|<M+1$ for all sufficiently large $M$.
\end{proof}

\begin{lemma}[Existence of a good $m$]\label{lem:good-m}
For all sufficiently large $M$, there exists $m\in[M,2M]\cap\bbN$ such that for every prime $p\le 2k$,
\[
X_p(m)\ge \mu_p/2
\quad\text{and}\quad
V_p(m,k)<J_p+t(M).
\]
\end{lemma}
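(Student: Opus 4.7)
The plan is to extract Lemma~\ref{lem:good-m} as an immediate pigeonhole consequence of the counting bound Lemma~\ref{lem:bad-small}. First I would observe that the discrete interval $[M,2M]\cap\bbN$ contains exactly $M+1$ integers. By Lemma~\ref{lem:bad-small}, for all sufficiently large $M$ one has the strict inequality $|\mathrm{Bad}(M)|<M+1$. Hence at least one integer $m\in[M,2M]\cap\bbN$ must lie outside $\mathrm{Bad}(M)$.

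Next I would unfold the definition
\[
\mathrm{Bad}(M)=\bigcup_{p\le 2k\ \mathrm{prime}}\big(\mathrm{BadCarry}_p(M)\cup \mathrm{BadSpike}_p(M)\big).
\]
For the $m$ produced above, the statement $m\notin\mathrm{Bad}(M)$ means that for every prime $p\le 2k$, one has both $m\notin\mathrm{BadCarry}_p(M)$ and $m\notin\mathrm{BadSpike}_p(M)$. By Definitions~\ref{def:badcarry} and~\ref{def:badspike}, these two non-memberships are exactly the desired inequalities $X_p(m)\ge\mu_p/2$ and $V_p(m,k)<J_p+t(M)$, which is the conclusion of the lemma.

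There is essentially no obstacle to overcome at this stage: the entire technical substance (the Chernoff lower-tail bound, the spike residue-class count, and the union bound over primes $p\le 2k$) has already been absorbed into Lemma~\ref{lem:bad-small}. The only bookkeeping item is to take $M_0$ at least as large as the threshold supplied by Lemma~\ref{lem:bad-small} (and in fact also at least as large as the threshold of Lemma~\ref{lem:weak-threshold}, so that the good $m$ then satisfies $V_p(m,k)\le\kappa_p(m)$ via Lemma~\ref{lem:forced-carries-small} in the subsequent argument).
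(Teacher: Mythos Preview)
Your proof is correct and follows exactly the paper's approach: the interval $[M,2M]\cap\bbN$ has $M+1$ elements while $|\mathrm{Bad}(M)|<M+1$ by Lemma~\ref{lem:bad-small}, so some $m$ avoids all the bad events, which by definition gives the two desired inequalities for every prime $p\le 2k$.
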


\begin{proof}
The interval $[M,2M]\cap\bbN$ has size $M+1$ while $\mathrm{Bad}(M)$ has size $<M+1$ by Lemma~\ref{lem:bad-small}.
\end{proof}

\section{Completion of the proof}\label{sec:finish}
We now complete the argument and prove the logarithmic window result.

\begin{proof}[Proof of Theorem~\ref{thm:window}]
Choose any constant $c$ with $C_1<c<C_2$.
For each sufficiently large $M$, set $k=\lfloor c\log M\rfloor$ and apply Lemma~\ref{lem:good-m}
to obtain an $m\in[M,2M]$ satisfying the small-prime carry and no-spike conditions for all primes $p\le 2k$.

Fix such an $m$.
For primes $p\le 2k$, the good-$m$ conditions imply $V_p(m,k)\le \kappa_p(m)$.
For primes $p>2k$, Lemma~\ref{lem:large-primes} implies $V_p(m,k)\le \kappa_p(m)$.
Therefore, using Lemma~\ref{lem:W-upper}, Lemma~\ref{lem:val-reduction}, and Lemma~\ref{lem:binom-reform}, this implies the factorial divisibility
$a!\,b!\mid n!\,(a+b-n)!$ for the triple
\[
n:=2m,\qquad b:=m,\qquad a:=m+k.
\]

It remains to verify the window bounds. Recall $k=a+b-n$.
Since $m\in[M,2M]$, $\log n = \log M+O(1)$ as $M\to\infty$.
So $k/\log n\to c$ as $M\to\infty$. Hence for large $M$,
\[
C_1\log n < k < C_2\log n.
\]
Letting $M\to\infty$ along an infinite sequence yields infinitely many triples.
\end{proof}

\section{Correspondence with the Lean development}
The accompanying Lean file \texttt{Erdos728b.lean} \cite{AlexeevLean} contains a fully formal proof of the main theorem. We map our lemmas to the Lean file as follows.

\begin{center}
\begin{tabular}{p{0.20\linewidth} p{0.73\linewidth}}
\hline
\textbf{In this writeup} & \textbf{In \texttt{Erdos728b.lean}}\\
\hline
Lemma~\ref{lem:binom-reform} &
Handled by rewriting the factorial divisibility goal in \texttt{good\_triples} and in the proof of \texttt{erdos\_728\_fc} \\
Lemma~\ref{lem:val-reduction} &
\texttt{lemma\_val\_reduction}\\
Lemma~\ref{lem:kummer} &
Mathlib lemma \texttt{padicValNat\_choose}, used in particular in \texttt{lemma\_forced\_carries\_largep} and \texttt{lemma\_forced\_carries\_smallp}\\
Lemma~\ref{lem:W-upper} &
Proved inside \texttt{lemma\_small\_primes\_good}, using \texttt{lemma\_W\_eq\_sum\_N\_pj} and \texttt{lemma\_N\_pj\_le\_ceil} \\
Lemma~\ref{lem:large-primes} &
\texttt{lemma\_p\_gt\_2k}, which invokes \texttt{lemma\_forced\_carries\_largep} \\
Lemma~\ref{lem:forced-carries-small} &
\texttt{lemma\_forced\_carries\_smallp}\\
Lemma~\ref{lem:weak-threshold} &
\texttt{lemma\_threshold\_weak\_uniform}, building on \texttt{lemma\_threshold\_reduction} and auxiliary asymptotic bounds \\
Lemma~\ref{lem:residue-uniform} &
\texttt{lemma\_residue\_interval}\\
Lemma~\ref{lem:mod-uniform} &
\texttt{lemma\_mod\_uniform}, with the size condition $Q_p\le M^{1-\eta}$ supplied by \texttt{lemma\_Q\_p\_bound} \\
Lemma~\ref{lem:chernoff} &
\texttt{lemma\_chernoff\_inequality} and \texttt{lemma\_chernoff\_binomial}, via \texttt{lemma\_exp\_sum\_X\_p} / \texttt{lemma\_markov\_exp} \\
Lemma~\ref{lem:badcarry-count} &
\texttt{lemma\_bad\_carries\_bound} and the subsequent summation lemmas \texttt{lemma\_sum\_bad\_carries\_le\_bound}, \texttt{lemma\_sum\_bad\_carries\_small}\\
Lemma~\ref{lem:badspike-count} &
\texttt{lemma\_spike\_count\_bound} and \texttt{lemma\_sum\_bad\_spikes\_le\_bound}\\
Lemma~\ref{lem:bad-small} &
Proved inside \texttt{lemma\_good\_m\_exists\_any\_c} via various auxiliary lemmas \\
Lemma~\ref{lem:good-m} &
\texttt{lemma\_good\_m\_exists\_any\_c}, then used in \texttt{erdos\_728\_fc}\\
\hline
\end{tabular}
\end{center}

\section*{Appendix: the story of this proof}

While the mathematical and literature contents of the writeup is already complete, this proof has a significance of being the first recognized Erd\H{o}s problem (a problem proposed by Paul Erd\H{o}s and collected on the Erd\H{o}s problems website \url{https://www.erdosproblems.com}) that was solved autonomously by an AI system, with no prior literature found as of the date of this writing.

As this is a milestone, it is of interest to ask about the nature of the proof, e.g. how complex/novel it is compared to other mathematical results. Unfortunately (or fortunately) the resulting proof is in formalized form in Lean. While this boosts confidence in correctness, it may not be readily digestible.

This writeup attempts to address this issue. The author has received substantial assistance from ChatGPT in writing this manuscript. Indeed, a large fraction of the words were penned by ChatGPT. However, the author does not take this to mean we compromise on correctness; in contrast, everything has been manually checked up to roughly the level that the author himself would be confident in were he to write it himself and send it off as a journal submission! Of course, mistakes are still possible and any correction is welcome.

It is of interest to record the story of this proof as follows. The ``raw'' story may be read in the Erd\H{o}s problems website thread: \url{https://www.erdosproblems.com/728}.

On Jan 4, 2026, Kevin Barreto announced that he had a proof from the AI system: Aristotle by Harmonic. This is a Lean system, and the input to Aristotle was based on an informal argument from GPT-5.2 Pro. However there was one nuance here: at that time, the problem as stated on the website was vague, with the intended interpretation from Erd\H{o}s unclear. Barreto's version resolved \emph{one} version of the problem, but it wasn't clear whether this was the intended version.

Afterwards, forum participants commented on this vagueness; a consensus then emerged that this should be regarded as a \emph{partial result}, which did not yet fully resolve the problem.

But - on Jan 5, 2026, Barreto again asked GPT-5.2 Pro whether its argument could be upgraded to tackle the version that forum participants had now identified as \emph{the} problem. GPT-5.2 Pro responded in affirmative; Aristotle was then run based on this latest response and produced a formal proof on Jan 6, 2026.

There was a misunderstanding around this time that this second output had been based on a human's mathematical observation, which would render the result not autonomous by the AIs involved. Nevertheless, Barreto had clarified that this was a misunderstanding and a human observation was not provided in order to reach this result. With autonomy confirmed, the problem was now understood to be fully resolved by AI.

In particular, Terence Tao has vouched for this autonomous status which helped in cementing the consensus.

Afterwards, a forum participant who goes by KoishiChan attempted a literature review to locate any prior literature resolving this problem. KoishiChan has been highly successful in locating prior literature in the past for problems claimed to be solved by AI; however this time around an existing literature resolving this problem was not found. It is of course possible that such literature exists, as it is impossible to disprove the possibility, but we have not found such literature as of the date of this writing. Thus, the current status is that \emph{Erd\H{o}s Problem 728 was fully resolved autonomously by AI with no prior human literature resolving the problem found}.

On Jan 6, 2026, Boris Alexeev ran Aristotle again on the Aristotle proof in order to simplify it, producing a new proof. This is the Lean proof in \cite{AlexeevLean} which this present writeup is based on.

The author has then worked with ChatGPT in order to extract a human-readable proof from \cite{AlexeevLean} to make it accessible to a wider audience. Later, the presentation was iteratively improved and relevant literature included in order to make it a fuller presentation with maximum value to the mathematical community; this is the present writeup.

For reference, the conversation with ChatGPT that the author collaborated with in producing this writeup can be accessed here.\footnote{\url{https://chatgpt.com/share/696148ea-ffd4-8010-830f-7c04f3ab5cb9}} Note that while there are many drafts there, none of them is the current writeup, in which the author further applied significant rewrites in various places by himself.

\section*{Appendix: beyond this proof}

Shortly after this proof was completed, Erd\H{o}s problem \#729 was then solved on Jan 10, 2026, and \#401 on Jan 11, 2026, again autonomously by AI.  It was then subsequently noticed that these problems could be solved by modifications of the present argument as well.

Concurrently, forum participants asked Carl Pomerance, the author of \cite{Pomerance2015}, himself about the proof. On Jan 11, 2026, Pomerance replied that a result similar ours should be possible to obtain by modifying his argument in \cite{Pomerance2015} and using a lemma which was identical to our Lemma \ref{lem:W-upper}. Pomerance then wrote up his extension as a note \cite{Pomerance2026}.

In light of these developments, it is of interest to compare our results with \cite{Pomerance2026}, which turn out to be very similar.

In order to unify the solutions of \#728, \#729, and \#401 together, we must extract a general theorem out of the present argument. It is as follows.

\begin{theorem}\label{thm:general}
There exist absolute constants $c_1,c_2 > 0$ with the following properties. Consider the set $S$ of $m \in \bbN$ such that, for all $0 \le k \le \exp(c_1 \sqrt{\log m})$,
\[
\nu_p\left(\binom{2m}{m}\right) - \nu_p\left(\binom{m+k}{m}\right) \ge c_2 \frac{\log m}{\log p} \cdot [p \le 2k]
\]
for all primes $p$. Then $S$ has asymptotic density $1$.
\end{theorem}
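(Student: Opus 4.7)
The plan is to recycle the prime-by-prime carry analysis from Section~\ref{sec:analysis}, but with the window length $k$ allowed to grow up to $K:=\lfloor \exp(c_1\sqrt{\log M})\rfloor$ in each dyadic block $[M,2M]$, and with the conclusion upgraded from ``some good $m$ exists in the window'' to ``almost every $m$ in the window is good''. Showing that $S$ has density~$1$ in $\bbN$ reduces, by dyadic decomposition of $[1,N]$, to showing that the proportion of good $m$ in each $[M,2M]$ tends to $1$ as $M\to\infty$.

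First, we reduce the verification for a fixed $m\in[M,2M]$ to one condition per prime, independent of $k$. For primes $p>2k$ the inequality in Theorem~\ref{thm:general} is automatic: its right-hand side vanishes, and it reduces to $\nu_p\binom{m+k}{m}\le \kappa_p(m)$, which follows from Lemmas~\ref{lem:val-reduction}, \ref{lem:W-upper}, and~\ref{lem:large-primes}. For primes $p\le 2k$, Lemmas~\ref{lem:val-reduction} and~\ref{lem:W-upper} give $\nu_p\binom{m+k}{m}\le V_p(m,k)$, and $V_p(m,k)$ is non-decreasing in $k$. Hence, invoking Lemma~\ref{lem:forced-carries-small}, it suffices to verify, for every prime $p\le 2K$,
\[
X_p(m)-V_p(m,K)\ \ge\ c_2\,\frac{\log m}{\log p}.
\]

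Second, we import the counting machinery of Section~\ref{sec:count} with the parameters $L_p:=\lfloor(1-\eta)\log M/\log p\rfloor$, $\mu_p:=L_p\,\theta(p)$, $J_p:=\lfloor\log_p K\rfloor$, and $t(M):=\lceil 10\log\log M\rceil$, and declare $m\in[M,2M]$ \emph{good} if, for every prime $p\le 2K$, both $X_p(m)\ge \mu_p/2$ and $V_p(m,K)<J_p+t(M)$ hold. A good $m$ then satisfies $X_p(m)-V_p(m,K)\ge \mu_p/2-J_p-t(M)$; since $\mu_p\gtrsim \log M/\log p$ uniformly, while both $J_p\le c_1\sqrt{\log M}/\log p$ and $t(M)=O(\log\log M)$ are of strictly smaller order than $\log M/\log p$ in the range $\log p\le c_1\sqrt{\log M}+O(1)$, any sufficiently small absolute constant $c_2>0$ yields the displayed inequality once $M$ is large enough.

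The main obstacle, which forces the square-root threshold on $k$ and pins down $c_1$, is to bound the density of the bad set. By Lemma~\ref{lem:badcarry-count} applied prime-by-prime, the carry failures contribute a total density of at most $\pi(2K)\cdot\exp(-\tilde\mu(M)/8)+O(K\cdot M^{-\eta})$, where $\tilde\mu(M):=\min_{p\le 2K}\mu_p\gtrsim \sqrt{\log M}/c_1$, the minimum being attained near $p\asymp K$. Up to subexponential factors the leading term is $\exp\bigl(c_1\sqrt{\log M}-\alpha\sqrt{\log M}/c_1\bigr)$ for an absolute constant $\alpha>0$, which tends to $0$ once $c_1^2<\alpha$; this is precisely the threshold foreshadowed by the Remark following Theorem~\ref{thm:window}. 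Meanwhile, by Lemma~\ref{lem:badspike-count}, the spike contribution is at most $\sum_{p\le 2K}p^{1-t(M)}+O(K^2/M)$, and both summands vanish as $M\to\infty$ since $t(M)\to\infty$ (making the first sum dominated by $2^{1-t(M)}\to 0$) and $K^2=M^{o(1)}$. Thus, with the absolute constants $c_1,c_2>0$ fixed sufficiently small once and for all, the bad density in $[M,2M]$ is $o(1)$, and dyadic summation yields density~$1$ for $S$.
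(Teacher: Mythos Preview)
Your proof is correct and follows essentially the same approach as the paper's: reduce to the per-prime gap $X_p(m)-V_p(m,K)\ge c_2\log m/\log p$ for $p\le 2K$, impose the carry and no-spike conditions from Section~\ref{sec:count}, and show the bad set in $[M,2M]$ has density $o(1)$ via a union bound over primes. One minor variation worth flagging: you keep $t(M)=O(\log\log M)$ and control the spike sum by $\sum_{p}p^{1-t(M)}\to 0$ (the same sharpening the paper uses later in Theorem~\ref{thm:general2}), whereas the paper's short proof of Theorem~\ref{thm:general} uses the cruder bound $\pi(2K)\cdot 2^{1-t(M)}$ from Lemma~\ref{lem:bad-small} and therefore takes $t(M)\asymp\sqrt{\log M}$; both choices work.
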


\begin{proof}
If we follow the reductions as in the writeup, to get $m \in S$ it is enough to show
\[
\kappa_p(m) - V_p(m,k) \ge c_2 \frac{\log m}{\log p}
\]
for primes $p \le 2k$.

Consider $m \in [M, 2M]$ as before. Now we do not enforce $t(M) = O(\log \log M)$ but leave $t(M)$ unspecified for now. If we look at Lemma \ref{lem:weak-threshold} we can indeed produce a gap of the form
\[
c_2 \frac{\log m}{\log p}
\]
as required. The remaining thing to consider is that the union bound in Lemma \ref{lem:bad-small} must still produce $|\mathrm{Bad}(M)| = o(M)$. This works out, except that we must choose $t(M)$ to be order of $\sqrt{\log M}$. Now we look back at Lemma \ref{lem:weak-threshold} and this choice still works.
\end{proof}

To solve problem \#728 \cite{Bloom728}, it is enough to show that the left-hand side in Theorem \ref{thm:general} is nonnegative, which is immediate from the theorem.

To solve problem \#729 \cite{Bloom729}, this problem corresponds to the following. For every $c > 0$, taking $k = \lfloor c \log m \rfloor$, we want to produce a threshold $p_{\min}(c)$ such that for all primes $p \ge p_{\min}$, the denominator of $(2m)!/(m!(m+k)!)$ is not divisible by $p$. By Theorem \ref{thm:general}, we must show
\[
c_2 \frac{\log m}{\log p} \cdot [p \le 2k] - \nu_p(k!) \ge 0.
\]
If $p > 2k$ the left-hand side is zero. So assume $p \le 2k$. If we use $\nu_p(k!) \le k/(p-1)$, it is seen that this is true for $p$ large, depending only on $c$, as required.

To solve problem \#401 \cite{Bloom401}, this problem is just a more precise version of \#729. The same examples work. There is an extra condition that must be satisfied which is
\[
\nu_p(m!) + \nu_p((m+k)!) - \nu_p((2m)!) \le 2m,
\]
for small primes, depending on $c$.
But by Legendre's formula the left-hand side is $O(\log m)$, so this is easy to satisfy.

Finally, in \cite{Pomerance2015} it was shown that
\[
(m+1)(m+2)\cdots (m+k)\ \Big|\ \binom{2m}{m}
\]
where $m$ is in a set of asymptotic density $1$, and $k$ is fixed or grows slowly with $m$. In \cite{Pomerance2026}, this was explicitly worked out to be $k \le \eta \log m$ for any $\eta < 1/\log 4$. We obtain a slightly weaker result, with $k \le c \log m$ for some small constant $c > 0$. The key is again the reduction to
\[
c_2 \frac{\log m}{\log p} \cdot [p \le 2k] - \nu_p(k!) \ge 0
\]
as in problem \#729. Then we follow a similar reasoning. In \cite{Pomerance2026} it was also shown that $\binom{m+k}{m} \mid \binom{2m}{m}$ for $k \le \exp(.8 \sqrt{\log m})$, where $m$ is in a set of asymptotic density $1$. This is slightly stronger than Theorem \ref{thm:general}, as the constant was explicitly worked out.

The next Appendix derives an effective version of Theorem \ref{thm:general} which may be of interest. In particular, the optimality of the result is investigated.

\section*{Appendix: effective bounds}

The following result was derived in a conversation with ChatGPT.\footnote{\url{https://chatgpt.com/share/696aa0bd-c560-8010-ab97-0cb938065f33}} It was drafted by ChatGPT and edited and checked by the author. There is one key new idea, which is that to get the best constants, $\theta(p)$, which can be as low as $1/3$, must be replaced by $1/2$. This can be done by inspecting the Markov chain from carry propagation. The results needed are the following.

\begin{lemma}[Carry chain]\label{lem:carry-chain}
Let $p$ be a prime and $L\ge 1$. Let $m$ be uniform in $\{0,1,\dots,p^L-1\}$. Write $
m=\sum_{i=0}^{L-1} a_i p^i$, $a_i\in\{0,1,\dots,p-1\}$, and define carries $(C_i)_{i=0}^L\subset\{0,1\}$ by $C_0:=0$ and $C_{i+1}:=\mathbf{1}\{\,2a_i + C_i \ge p\,\}$. Set $S_L:=\sum_{i=1}^L C_i$.

For $\lambda\in\mathbb{R}$ define the tilted matrix $T_p(\lambda) := \big(P_p(u,v)e^{\lambda v}\big)_{u,v\in\{0,1\}}$,
where $P_p$ is the transition matrix of the carry chain $(C_i)$. Let $\rho_p(\lambda)$ be the Perron--Frobenius eigenvalue of $T_p(\lambda)$.

\begin{enumerate}
\item We have
\[
\mathbb{E}\big[e^{\lambda S_L}\big]=e_0^\top T_p(\lambda)^L \mathbf{1},
\qquad e_0=(1,0)^\top,\ \mathbf{1}=(1,1)^\top.
\]
Moreover, there is a constant $C_p(\lambda) > 0$, depending only on $p,\lambda$ and not on $L$, such that
\[
\mathbb{E}\big[e^{\lambda S_L}\big]\ \le\ C_p(\lambda)\,\rho_p(\lambda)^L.
\]
For each fixed $\lambda$ there exist $p_1(\lambda)$ and $C(\lambda)$ such that $\sup_{p\ge p_1(\lambda)} C_p(\lambda) =: C(\lambda)<\infty$.

\item For any $s\in(0,1)$ and any $\lambda<0$,
\[
\mathbb{P}(S_L\le sL)\ \le\ C_p(\lambda)\,\exp\!\big(L(\log\rho_p(\lambda)-\lambda s)\big).
\]

\item Fix $\delta\in(0,1)$ and $\varepsilon>0$, set $s:=(1-\delta)/2$ and
\[
I(\delta):=D\!\left(\frac{1-\delta}{2}\,\Big\|\,\frac12\right)
=\frac12\Big((1-\delta)\log(1-\delta)+(1+\delta)\log(1+\delta)\Big).
\]
Then there exist $p_0=p_0(\delta,\varepsilon)$ and $C(\delta) > 0$ such that for all primes $p\ge p_0$ and all $L\ge 1$,
\[
\mathbb{P}(S_L\le sL)\ \le\ C(\delta)\,\exp\!\big(-(I(\delta)-\varepsilon)L\big).
\]

\item For each fixed prime $p$ and each $\delta\in(0,1)$ there exist constants
$\gamma_p(\delta)>0$ and $C_p(\delta) > 0$ such that for all $L\ge 1$,
\[
\mathbb{P}(S_L\le sL)\ \le\ C_p(\delta)\,e^{-\gamma_p(\delta)L}.
\]
\end{enumerate}

Note that what the constant $C$ depends on depends on context.
\end{lemma}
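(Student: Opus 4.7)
The plan begins by recognizing $(C_i)_{i\ge 0}$ as a homogeneous two-state Markov chain on $\{0,1\}$. Uniformity of $m$ in $\{0,\ldots,p^L-1\}$ makes the digits $a_0,\ldots,a_{L-1}$ i.i.d.\ uniform on $\{0,\ldots,p-1\}$, so $C_{i+1}$ depends on the past only through $C_i$ and the fresh digit $a_i$. Direct counting gives, for odd primes $p$, $P_p(0,1)=(p-1)/(2p)$ and $P_p(1,1)=(p+1)/(2p)$, while for $p=2$ the chain is simply i.i.d.\ Bernoulli$(1/2)$. In every case a short calculation shows the stationary distribution is $\pi=(1/2,1/2)$, and $T_p(\lambda)$ has strictly positive entries.

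\textbf{Part~(1): MGF identity and spectral bound.} I would define $f_L(u):=\mathbb{E}[e^{\lambda S_L}\mid C_0=u]$ and obtain, by conditioning on $C_1$ and applying the Markov property, the recursion $f_L(u)=\sum_{v}P_p(u,v)e^{\lambda v}f_{L-1}(v)$, i.e.\ $f_L=T_p(\lambda)f_{L-1}$ with $f_0\equiv 1$. Iterating gives $f_L=T_p(\lambda)^L\mathbf{1}$, and evaluating at $u=0$ yields the displayed MGF identity. Perron--Frobenius applied to the $2\times 2$ positive matrix $T_p(\lambda)$ provides a simple top eigenvalue $\rho_p(\lambda)>0$ with strictly positive left/right eigenvectors $\ell_p(\lambda),r_p(\lambda)$, so $T_p(\lambda)^L=\rho_p(\lambda)^L\,r_p\ell_p^\top/(\ell_p^\top r_p)+R_p^L$ where the residual term has spectral radius strictly below $\rho_p(\lambda)$ (identically zero when $p=2$). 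This yields $e_0^\top T_p(\lambda)^L\mathbf{1}\le C_p(\lambda)\rho_p(\lambda)^L$ with $C_p(\lambda)$ explicit in eigenvector components. For the uniform bound, note that as $p\to\infty$, $T_p(\lambda)\to T_\infty(\lambda)$, the rank-one matrix with both rows equal to $(1/2)(1,e^\lambda)$ and Perron eigenvalue $\rho_\infty(\lambda)=(1+e^\lambda)/2$; continuity of Perron data in the matrix entries yields $C_p(\lambda)\to C_\infty(\lambda)<\infty$, hence $C(\lambda):=\sup_{p\ge p_1(\lambda)}C_p(\lambda)<\infty$.

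\textbf{Parts~(2), (3), (4): Chernoff and rates.} Part~(2) is Markov's inequality applied to $e^{\lambda S_L}$ for $\lambda<0$, combined with Part~(1). For Part~(3), I would use the limiting cumulant $\psi_\infty(\lambda):=\log((1+e^\lambda)/2)$, whose Legendre transform at $s=(1-\delta)/2$ equals $D(s\|1/2)=I(\delta)$, attained at $\lambda^\star=\log(s/(1-s))<0$. Continuity of $\log\rho_p(\lambda^\star)$ in the matrix entries of $T_p(\lambda^\star)$ gives $\log\rho_p(\lambda^\star)\to\log\rho_\infty(\lambda^\star)$, so for $p\ge p_0(\delta,\varepsilon)$ one has $\lambda^\star s-\log\rho_p(\lambda^\star)\ge I(\delta)-\varepsilon$; combined with the uniform constant from Part~(1) this produces the claimed bound with $C(\delta):=C(\lambda^\star)$. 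Part~(4) is a standard large-deviations fact for irreducible finite Markov chains: $\lambda\mapsto\log\rho_p(\lambda)$ is smooth and strictly convex with derivative $\pi(1)=1/2$ at $\lambda=0$, so $\gamma_p(\delta):=\sup_{\lambda<0}(\lambda s-\log\rho_p(\lambda))>0$ for every $s<1/2$.

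\textbf{Main obstacle.} The only non-mechanical step is controlling $C_p(\lambda)$ and $\log\rho_p(\lambda^\star)$ uniformly as $p\to\infty$; the Perron eigenvectors of $T_p(\lambda)$ are not \emph{a priori} bounded away from degeneracy, but the limit matrix $T_\infty(\lambda)$ is explicit with strictly positive entries, so continuity of Perron data in the matrix entries suffices. A minor bookkeeping wrinkle is that $T_2(\lambda)$ is rank one rather than rank two, but this is coherent because $T_2(\lambda)=T_\infty(\lambda)$ identically, so the $p=2$ case is literally the limiting case and requires no separate argument.
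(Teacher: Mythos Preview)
Your proposal is correct and follows essentially the same route as the paper: identify $(C_i)$ as a two-state Markov chain, derive the MGF identity by iterating $f_L=T_p(\lambda)f_{L-1}$, bound via the Perron eigenvalue, pass to the limit $T_\infty(\lambda)$ using continuity of Perron data, and then run Markov/Chernoff with the optimal tilt $\lambda^\star=\log\frac{s}{1-s}$ for Part~(3) and convexity of $\log\rho_p$ with $\Lambda_p'(0)=1/2$ for Part~(4). The only noteworthy difference is in Part~(1): where you invoke the full spectral decomposition $T_p^L=\rho_p^L\,r_p\ell_p^\top/(\ell_p^\top r_p)+R_p^L$ and bound the residual, the paper uses the cleaner entrywise trick of normalizing the right Perron eigenvector $v$ so that $\min v=1$, $\max v=R$, and then sandwiching $\mathbf{1}\le Rv$ and $v\le R\mathbf{1}$ to get $T_p^L\mathbf{1}\le R^2\rho_p^L\mathbf{1}$ directly, with $C_p(\lambda)=R^2$; this avoids any discussion of the second eigenvalue or residual term and makes the uniformity in $p$ immediate from continuity of $v$ alone.
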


\begin{proof}
Conditioning on $C_i$ and using that $a_i$ is uniform gives the two-state Markov chain. For $p\ge 3$,
\[
P_p=
\begin{pmatrix}
\frac12+\frac{1}{2p} & \frac12-\frac{1}{2p}\\[4pt]
\frac12-\frac{1}{2p} & \frac12+\frac{1}{2p}
\end{pmatrix},
\]
and for $p=2$ the carries are independent so both rows equal $(1/2,1/2)$.

(1) The identity $\mathbb{E}[e^{\lambda S_L}]=e_0^\top T_p(\lambda)^L\mathbf{1}$ follows by iterating the recursion. Let $v_{p,\lambda}>0$ be a right Perron--Frobenius eigenvector: $T_p(\lambda)v_{p,\lambda}=\rho_p(\lambda)v_{p,\lambda}$.
Normalize $v_{p,\lambda}$ so that $\min(v_{p,\lambda,0},v_{p,\lambda,1})=1$ and let
$R_{p,\lambda}:=\max(v_{p,\lambda,0},v_{p,\lambda,1})$. Then, entrywise, $\mathbf{1}\le R_{p,\lambda}v_{p,\lambda}$, so
\[
T_p(\lambda)^L\mathbf{1}\ \le\ R_{p,\lambda}T_p(\lambda)^L v_{p,\lambda}
=R_{p,\lambda}\rho_p(\lambda)^L v_{p,\lambda}\ \le\ R_{p,\lambda}^2\rho_p(\lambda)^L\mathbf{1}.
\]
Hence
\[
\mathbb{E}[e^{\lambda S_L}] \le R_{p,\lambda}^2\rho_p(\lambda)^L.
\]
Thus we can take $C_p(\lambda):=R_{p,\lambda}^2$.

For fixed $\lambda$, the matrices $T_p(\lambda)$ converge entrywise to
\[
T_\infty(\lambda)=\frac12\begin{pmatrix}1 & e^\lambda\\ 1 & e^\lambda\end{pmatrix}\qquad(p\to\infty),
\]
and $T_\infty(\lambda)$ has strictly positive entries. By continuity of the Perron--Frobenius eigenvector, $R_{p,\lambda}$ remains bounded for all sufficiently large $p$.
Therefore $\sup_{p\ge p_1(\lambda)} C_p(\lambda) <\infty$.

(2) For $\lambda<0$, on the event $\{S_L\le sL\}$ we have $e^{\lambda S_L}\ge e^{\lambda sL}$, so Markov's inequality gives
\[
\mathbb{P}(S_L\le sL)\le e^{-\lambda sL}\,\mathbb{E}[e^{\lambda S_L}]
\le C_p(\lambda)\exp\!\big(L(\log\rho_p(\lambda)-\lambda s)\big).
\]

(3) Fix $\delta,\varepsilon$. Choose the Bernoulli$(1/2)$ optimal tilt
\[
\lambda^*:=\log\Big(\frac{s}{1-s}\Big)=\log\Big(\frac{1-\delta}{1+\delta}\Big)<0.
\]
For the limit matrix $T_\infty(\lambda)$, the Perron--Frobenius eigenvalue is
\[
\rho_\infty(\lambda)=\frac{1+e^\lambda}{2}.
\]
A direct computation gives $\lambda^* s-\log\rho_\infty(\lambda^*)=I(\delta)$.
Since $\rho_p(\lambda^*)\to\rho_\infty(\lambda^*)$, we have
$\log\rho_p(\lambda^*)\le \log\rho_\infty(\lambda^*)+\varepsilon$ for all primes $p\ge p_0(\delta,\varepsilon)$.
By (1) applied to $\lambda=\lambda^*$, enlarging $p_0$ if needed, we have
$\sup_{p\ge p_0} C_p(\lambda^*)\le C(\delta)$ for some finite $C(\delta)$.
Now (2) gives, for $p\ge p_0$,
\[
\mathbb{P}(S_L\le sL)\le C(\delta)\exp\!\big(L(\log\rho_p(\lambda^*)-\lambda^* s)\big)
\le C(\delta)\exp\!\big(-(I(\delta)-\varepsilon)L\big).
\]

(4) Fix $p,\delta$.
Define $\Lambda_p(\lambda):=\log\rho_p(\lambda)$.
Then $\Lambda_p$ is convex, $\Lambda_p(0)=0$, and $\Lambda_p'(0)=1/2$.
So $\sup_{\lambda<0}\{\lambda s-\Lambda_p(\lambda)\}>0$. Choose $\lambda_0<0$ such that
$\lambda_0 s-\Lambda_p(\lambda_0)=: \gamma_p(\delta)>0$ and set $C_p(\delta):=C_p(\lambda_0)$.
By (2) with $\lambda=\lambda_0$
\[
\mathbb{P}(S_L\le sL)\le C_p(\delta)\exp\!\big(L(\Lambda_p(\lambda_0)-\lambda_0 s)\big)
= C_p(\delta)e^{-\gamma_p(\delta)L}.
\]
\end{proof}

We now derive the effective bounds.

\begin{theorem}[General gap with a wider prime range]\label{thm:general2}
Let
\[
c_*:=\sqrt{\log 2},
\qquad
I(\delta):=\frac12\Big((1-\delta)\log(1-\delta)+(1+\delta)\log(1+\delta)\Big)
\qquad(0<\delta<1).
\]
Fix constants $0<c<c_p<c_*$ and choose $\delta\in(0,1)$ such that $c_p^2<I(\delta)$.
Define
\[
K(m):=\Big\lfloor \exp\!\big(c\sqrt{\log m}\big)\Big\rfloor,
\qquad
P(m):=\Big\lfloor \exp\!\big(c_p\sqrt{\log m}\big)\Big\rfloor.
\]
Let $S$ be the set of $m\in\bbN$ such that, for all integers $k$ with $0\le k\le K(m)$,
\[
\nu_p\!\left(\binom{2m}{m}\right)-\nu_p\!\left(\binom{m+k}{m}\right)
\ \ge\
\frac{1-\delta}{2}\,\frac{\log m}{\log p}
\cdot [\,p\le P(m)\,]
\]
for all primes $p$. Then $S$ has asymptotic density $1$.
\end{theorem}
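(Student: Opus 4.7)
The plan is to use the framework of Section \ref{sec:analysis}, upgraded by replacing the crude digit count $X_p(m)$ (whose mean is only $\sim L\theta(p)\ge L/3$) by the actual carry chain $S_L$ of Lemma \ref{lem:carry-chain} (mean $L/2$, large-deviation rate $I(\delta)$ at the tilt $s=(1-\delta)/2$). Combining Lemmas \ref{lem:val-reduction} and \ref{lem:W-upper},
\[
\nu_p\!\left(\binom{2m}{m}\right)-\nu_p\!\left(\binom{m+k}{m}\right)\ \ge\ \kappa_p(m)-V_p(m,k),
\]
so it suffices to produce, for each $m$ in a density--$1$ set, a single bound on $\kappa_p(m)-V_p(m,K(m))$ that works for all $k\le K(m)$ and all primes $p$: at least $\tfrac{1-\delta}{2}\log_p m$ for $p\le P(m)$, and merely nonnegative for $p>P(m)$. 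Since $c_p>c$, for large $m$ one has $P(m)>2K(m)$, so Lemma \ref{lem:large-primes} handles $p>P(m)$ automatically.

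By continuity of $I$, pick $\delta'\in(0,\delta)$ with $c_p^2<I(\delta')$, then choose $\eta,\varepsilon>0$ so small that
\[
c_p^2<(1-\eta)(I(\delta')-\varepsilon)\qquad\text{and}\qquad (1-\delta')(1-\eta)\ge 1-\delta.
\]
For $m\in[M,2M]$, set $s:=(1-\delta')/2$, $L_p:=\lfloor(1-\eta)\log_p M\rfloor$, $J_p:=\lfloor\log_p K(M)\rfloor$, $t(M):=\lceil\log\log M\rceil$, and declare $m$ \emph{good} if for every prime $p\le P(M)$ both $S_{L_p}(m)\ge sL_p$ and $V_p(m,K(M))<J_p+t(M)$ hold. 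Let $\mathrm{Bad}(M)$ be the complement in $[M,2M]$, decomposed as in Section \ref{sec:count} into carry- and spike-failures.

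The heart of the proof is the union bound $|\mathrm{Bad}(M)|=o(M)$. For $p\ge p_0(\delta',\varepsilon)$, Lemma \ref{lem:carry-chain}(3) together with Lemma \ref{lem:residue-uniform} applied to residues modulo $p^{L_p}\le M^{1-\eta}$ gives
\[
\#\mathrm{BadCarry}_p(M)\ \le\ (M+1)\,C\exp\!\bigl(-(I(\delta')-\varepsilon)L_p\bigr)+2p^{L_p},
\]
with the analogous bound for the finitely many $p<p_0$ supplied by part (4) at positive rate $\gamma_p(\delta')$. Summing over $p\le P(M)$ and using $\pi(P(M))\le P(M)=\exp(c_p\sqrt{\log M})$ and $L_p\ge (1-\eta)\sqrt{\log M}/c_p-1$ near $p\approx P(M)$, the total is at most
\[
(M+1)\exp\!\Bigl\{\sqrt{\log M}\,\bigl[c_p-(1-\eta)(I(\delta')-\varepsilon)/c_p\bigr]+o(\sqrt{\log M})\Bigr\}+o(M)\ =\ o(M)
\]
by our choice of $\eta,\varepsilon$. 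For the spike part, Lemma \ref{lem:badspike-count} with $k=K(M)$ gives $\#\mathrm{BadSpike}_p(M)\le (M+1)p^{1-t(M)}+2K(M)$; summing, the first term is $O((M+1)\cdot 2^{1-t(M)})=o(M)$ since $t(M)\to\infty$, and the second contributes $2K(M)\pi(P(M))=\exp(O(\sqrt{\log M}))=o(M)$.

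For good $m$, any $k\le K(M)$, and any prime $p\le P(M)$,
\begin{align*}
\kappa_p(m)-V_p(m,k)\ &\ge\ S_{L_p}(m)-V_p(m,K(M))\\
&\ge\ sL_p-J_p-t(M)+1\\
&\ge\ s(1-\eta)\log_p m\ -\ \tfrac{c\sqrt{\log m}}{\log p}\ -\ t(M)\ -\ O(1),
\end{align*}
and because $s(1-\eta)-(1-\delta)/2=:\alpha>0$ while $\log_p m\ge\sqrt{\log m}/c_p$ throughout $p\le P(m)$, the slack $\alpha\log_p m$ dominates $O(\sqrt{\log m}/\log p)+t(M)$ for all large $m$, yielding the desired bound. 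Dyadic summation on $[1,N]=\bigcup_j [2^j,2^{j+1}]$ promotes $|\mathrm{Bad}(M)|=o(M)$ to asymptotic density $1$ for $S$. The main obstacle is precisely the balance in this union bound: the $\pi(P(M))\asymp\exp(c_p\sqrt{\log M})$ primes are barely beaten by the tail strength $\exp(-(I(\delta')-\varepsilon)\sqrt{\log M}/c_p)$ exactly under the hypothesis $c_p^2<I(\delta)\uparrow\log 2$, which is why the sharp two-state-chain rate of Lemma \ref{lem:carry-chain}, rather than the coarser $X_p$-count, is essential for reaching the threshold $c_*=\sqrt{\log 2}$.
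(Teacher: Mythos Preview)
Your argument follows essentially the same route as the paper's proof: replace the crude digit count $X_p$ by the carry chain of Lemma~\ref{lem:carry-chain}, pick an auxiliary $\delta'<\delta$ and small $\eta,\varepsilon$ to create slack, run the same BadCarry/BadSpike union bound over primes up to the new threshold $P$, and absorb the lower-order terms into the gap between $(1-\delta')(1-\eta)/2$ and $(1-\delta)/2$.

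Two small but genuine slips to fix. First, on the dyadic block $[M,2M]$ you must take $K:=K(2M)$ and $P:=P(2M)$ (as the paper does), not $K(M)$ and $P(M)$: for $m\in[M,2M]$ one has $K(m)\ge K(M)$ and $P(m)\ge P(M)$, so your good-$m$ conditions as stated fail to cover $k\in(K(M),K(m)]$ in the spike bound and primes $p\in(P(M),P(m)]$ in the carry bound, and the dyadic summation then does not literally yield density $1$ for $S$. Second, your choice ``$(1-\delta')(1-\eta)\ge 1-\delta$'' must be strict, since you immediately invoke $\alpha:=s(1-\eta)-(1-\delta)/2>0$ to absorb the $c\sqrt{\log m}/\log p+t(M)+O(1)$ terms; with equality there is no slack. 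Both are one-line repairs and do not affect the substance of the argument.
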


\begin{proof}
We work on a scale $m\in[M,2M]$ and show that, for each large $M$, the desired inequalities hold for all but $o(M)$ integers
$m\in[M,2M]\cap\bbN$. The conclusion then follows.

Since $I$ is continuous and strictly increasing, we may choose $\delta_0\in(0,\delta)$ with $c_p^2<I(\delta_0)$.
We choose $\eta\in(0,1)$ and $\varepsilon>0$ so small that
\begin{equation}\label{eq:cp-margin}
c_p^2 < (1-\eta)\bigl(I(\delta_0)-\varepsilon\bigr),
\qquad\text{and}\qquad
(1-\eta)(1-\delta_0) > 1-\delta.
\end{equation}
Set
\[
t(M):=\left\lceil \log\log M\right\rceil,
\]
so $t(M)\to\infty$ and $t(M)=o(\sqrt{\log M})$.

Let $K := K(2M)$, $P := P(2M)$. For each prime $p\le P$, define the digit depth
\[
L_p:=\left\lfloor \frac{(1-\eta)\log M}{\log p}\right\rfloor
\qquad\text{and}\qquad
\mu_p:=\frac{L_p}{2}.
\]
Write the base-$p$ expansion
$m \equiv \sum_{i=0}^{L_p-1} a_i p^i \pmod{p^{L_p}}$, $a_i\in\{0,1,\dots,p-1\}$.
Let $C_0:=0$ and define carries $C_{i+1}\in\{0,1\}$ by $C_{i+1}:=\mathbf{1}\{\,2a_i + C_i \ge p\,\}$.
Define the truncated carry count
\[
Y_p(m):=\sum_{i=1}^{L_p} C_i.
\]

For each prime $p\le P$, define
\[
\mathrm{BadCarry}_p(M)
:=\bigl\{m\in[M,2M]: Y_p(m) < (1-\delta_0)\mu_p \bigr\},
\]
and
\[
\mathrm{BadSpike}_p(M)
:=\bigl\{m\in[M,2M]: V_p(m,K) \ge J_p+t(M)\bigr\},
\qquad
J_p:=\lfloor \log_p K\rfloor.
\]
Let
\[
\mathrm{Bad}(M):=\bigcup_{p\le P}\Big(\mathrm{BadCarry}_p(M)\cup \mathrm{BadSpike}_p(M)\Big).
\]
We claim $|\mathrm{Bad}(M)|=o(M)$.

The event $m\in\mathrm{BadCarry}_p(M)$ depends only on $m \bmod p^{L_p}$.
Among residues $r \bmod p^{L_p}$, the digits are i.i.d.\ uniform and the carry variables $(C_i)$ form the Markov chain of Lemma~\ref{lem:carry-chain},
so $Y_p(r)$ has the same law as $S_{L_p}$ in that lemma.
By Lemma~\ref{lem:carry-chain}(3) with $\delta=\delta_0$, there are $p_0=p_0(\delta_0,\varepsilon)$ and $C(\delta_0)$ such that for every prime $p\ge p_0$,
\[
\mathbb{P}\!\left(Y_p(m) < (1-\delta_0)\frac{L_p}{2}\right)
\ \le\ C(\delta_0)\exp\!\big(-(I(\delta_0)-\varepsilon)L_p\big).
\]
For the finitely many primes $p<p_0$, Lemma~\ref{lem:carry-chain}(4) gives
\[
\mathbb{P}\!\left(Y_p(m) < (1-\delta_0)\frac{L_p}{2}\right)\ \le\ C_p(\delta_0)e^{-\gamma_p(\delta_0)L_p}.
\]
The total contribution of these is $o(M)$ and may be absorbed.
For $p \ge p_0$, the same residue-counting argument as Lemma~\ref{lem:badcarry-count} yields
\[
|\mathrm{BadCarry}_p(M)|
\ \le\
(M+1)\,C(\delta_0)\exp\!\big(-(I(\delta_0)-\varepsilon)L_p\big)+2p^{L_p}.
\]
Now sum over $p\le P$.
Since $p^{L_p}\le M^{1-\eta}$, the boundary terms contribute $o(M)$.
For the exponential terms,
\[
L_p \ \ge\ \frac{(1-\eta)\log M}{\log P}-1
\ =\ \frac{1-\eta}{c_p}\sqrt{\log M}+O(1).
\]
So the total contribution of exponential terms is
\[
\le\
C(\delta_0) M\cdot \pi(P)\cdot \exp\!\Big(-\frac{(1-\eta)(I(\delta_0)-\varepsilon)}{c_p}\sqrt{\log M}+O(1)\Big)
=o(M)
\]
by \eqref{eq:cp-margin}.

For spikes, the proof of Lemma~\ref{lem:badspike-count} gives, for each prime $p$,
\[
|\mathrm{BadSpike}_p(M)| \ \le\ (M+1)\,p^{1-t(M)}+2K.
\]
Summing over $p\le P$ gives
\[
\sum_{p\le P}|\mathrm{BadSpike}_p(M)|
\ \le\
(M+1)\sum_{p\le P}p^{1-t(M)}+2K\pi(P).
\]
We bound $\sum_{p\le P}p^{1-t(M)}\le \sum_{n\ge2}n^{1-t(M)}=\zeta(t(M)-1)-1=o(1)$ since $t(M)\to\infty$, where $\zeta$ is the Riemann zeta function.
Also $K\pi(P)=o(M)$.

Combining everything, we have shown $|\mathrm{Bad}(M)|=o(M)$.

Let $m\in [M,2M] \setminus \mathrm{Bad}(M)$, and let $0\le k\le K(m)$ and prime $p\le P(m)$. We have $Y_p(m)\ge (1-\delta_0)\mu_p$ and $V_p(m,K)<J_p+t(M)$.
By Kummer's theorem, $\kappa_p(m)\ge Y_p(m)$.
By Lemmas~\ref{lem:val-reduction} and \ref{lem:W-upper}, $\nu_p\big(\binom{m+k}{m}\big)\le V_p(m,k)\le V_p(m,K)$.
So
\begin{equation}\label{eq:gap-d0}
\nu_p\!\left(\binom{2m}{m}\right)-\nu_p\!\left(\binom{m+k}{m}\right)
\ \ge\
(1-\delta_0)\mu_p - (J_p+t(M)).
\end{equation}
Note
\[
(1-\delta_0)\mu_p
\ \ge\
\frac{1-\delta_0}{2} \left(\frac{(1-\eta)\log M}{\log p}-1\right).
\]
Also $J_p\ll \frac{\sqrt{\log M}}{\log p}$ and $t(M)=o(\sqrt{\log M})$.
As there is a difference between the coefficient $(1-\delta_0)(1-\eta)$ and $(1-\delta)$ from \eqref{eq:cp-margin}, the $J_p+t(M)+O(1)$ subtraction can be absorbed.
Thus for all large $M$ and all primes $p\le P(m)$,
\[
\nu_p\!\left(\binom{2m}{m}\right)-\nu_p\!\left(\binom{m+k}{m}\right)
\ \ge\
\frac{1-\delta}{2}\,\frac{\log m}{\log p}.
\]
Finally, for primes $p>P(m)$, we have $p \ge 2k$ for large $m$,
so Lemma~\ref{lem:large-primes} implies
\[
\nu_p\!\left(\binom{2m}{m}\right)-\nu_p\!\left(\binom{m+k}{m}\right)\ge 0.
\]
\end{proof}

\begin{remark}
As $I(1) = \log 2$, $c_* = \sqrt{I(1)}$ is as far as the argument can go. As a consequence, we obtain the following corollaries with constants matching Pomerance's note \cite{Pomerance2026}.
\end{remark}

\begin{corollary}\label{cor:po-thm1-2}
Let $c < \sqrt{\log 2} \approx 0.833$. Let $S$ be the set of $m \in \bbN$ such that
\[
\binom{m+k}{m}\ \Big|\ \binom{2m}{m}
\]
for all $k \le \exp(c \sqrt{\log m})$. Then $S$ has asymptotic density $1$.
\end{corollary}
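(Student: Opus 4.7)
The plan is to derive this as an almost immediate repackaging of Theorem~\ref{thm:general2}. Given $c<\sqrt{\log 2}$, I would first pick an intermediate constant $c_p$ with $c<c_p<\sqrt{\log 2}=c_*$, and then, using that $I:(0,1)\to(0,\log 2)$ is continuous with $I(1)=\log 2$, choose $\delta\in(0,1)$ so that $c_p^{\,2}<I(\delta)$. With these parameters, Theorem~\ref{thm:general2} supplies a set $S'\subseteq\bbN$ of asymptotic density $1$ on which the valuation gap
\[
\nu_p\!\left(\binom{2m}{m}\right)-\nu_p\!\left(\binom{m+k}{m}\right)\ \ge\ \frac{1-\delta}{2}\cdot\frac{\log m}{\log p}
\]
holds for every prime $p\le P(m):=\lfloor\exp(c_p\sqrt{\log m})\rfloor$ and every $0\le k\le K(m):=\lfloor\exp(c\sqrt{\log m})\rfloor$. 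In particular the gap is strictly positive (and certainly nonnegative) on this range of primes.

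Next I would handle the primes $p>P(m)$. Because $c<c_p$, we have $K(m)=o(P(m))$, so for all sufficiently large $m$ the inequality $p>P(m)\ge 2K(m)\ge 2k$ holds for every admissible $k$. Thus Lemma~\ref{lem:large-primes} applies and yields $V_p(m,k)\le\kappa_p(m)$ directly, which by Lemmas~\ref{lem:W-upper} and \ref{lem:val-reduction} gives $\nu_p\!\left(\binom{m+k}{m}\right)\le \nu_p\!\left(\binom{2m}{m}\right)$.

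Combining both regimes prime-by-prime, for each $m\in S'$ that is large enough and each $k\le\exp(c\sqrt{\log m})$, every prime contributes a nonnegative valuation to the ratio $\binom{2m}{m}/\binom{m+k}{m}$, so the divisibility $\binom{m+k}{m}\mid\binom{2m}{m}$ holds. Modifying $S'$ by the finitely many exceptional small $m$ does not change its density, so the set $S$ described in the corollary contains a density-$1$ set, hence has density $1$. I do not expect any genuine obstacle here; the only moving part is checking the threshold $P(m)>2K(m)$, which is clean from $c<c_p$, and tracking that the strict inequality $c_p<c_*$ leaves enough room to pick a valid $\delta$ in the input hypothesis of Theorem~\ref{thm:general2}.
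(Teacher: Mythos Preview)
Your proposal is correct and follows the paper's approach of deducing the corollary directly from Theorem~\ref{thm:general2}. Note that your separate treatment of primes $p>P(m)$ via Lemma~\ref{lem:large-primes} is redundant: the indicator $[p\le P(m)]$ in the conclusion of Theorem~\ref{thm:general2} already forces the bound to be $\ge 0$ for those primes, which is exactly the inequality $\nu_p\binom{2m}{m}\ge\nu_p\binom{m+k}{m}$ you need.
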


\begin{proof}
Immediate from Theorem \ref{thm:general2}.
\end{proof}

\begin{corollary}\label{cor:po-thm1-1}
Let $c < 1/(2 \log 2) \approx 0.721$. Let $S$ be the set of $m \in \bbN$ such that
\[
(m+1)(m+2)\cdots (m+k)\ \Big|\ \binom{2m}{m}
\]
for all $k \le c \log m$. Then $S$ has asymptotic density $1$.
\end{corollary}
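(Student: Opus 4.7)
The plan is to deduce the corollary from Theorem~\ref{thm:general2} by rewriting the product divisibility as a $p$-adic inequality. Using the identity $(m+1)\cdots(m+k)=k!\,\binom{m+k}{m}$, the divisibility $(m+1)\cdots(m+k)\mid\binom{2m}{m}$ is equivalent to
\[
\nu_p\!\binom{2m}{m}-\nu_p\!\binom{m+k}{m}\ \ge\ \nu_p(k!)\qquad\text{for every prime $p$.}
\]
Theorem~\ref{thm:general2} already produces a quantitative lower bound on the left-hand side along a density-$1$ set of $m$, so our task reduces to choosing the theorem's parameters so that this lower bound uniformly dominates $\nu_p(k!)$ for all $k\le c\log m$ and all primes $p$.

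The quantitative input on the right is Legendre's bound $\nu_p(k!)\le k/(p-1)\le c\log m/(p-1)$. Matching this against the gap $\tfrac{1-\delta}{2}\cdot\tfrac{\log m}{\log p}$ of Theorem~\ref{thm:general2} reduces the comparison to
\[
c\ \le\ \frac{(1-\delta)(p-1)}{2\log p}\qquad\text{for every prime $p$.}
\]
The function $p\mapsto(p-1)/(2\log p)$ is minimized on primes at $p=2$, where it equals $1/(2\log 2)$. The hypothesis $c<1/(2\log 2)$ therefore lets us fix $\delta\in(0,1)$ small enough that $c<(1-\delta)/(2\log 2)$, and then apply Theorem~\ref{thm:general2} with the same $c$ and any $c_p\in(c,\sqrt{\log 2})$ satisfying $c_p^2<I(\delta)$. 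Since $K(m)=\lfloor\exp(c\sqrt{\log m})\rfloor$ eventually exceeds $c\log m$, the theorem's gap is available for every $k\le c\log m$ and every small prime $p\le P(m)$, and the displayed inequality shows it dominates $\nu_p(k!)$ there.

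For primes $p>P(m)$, Theorem~\ref{thm:general2} supplies only the trivial lower bound $0$ (via Lemma~\ref{lem:large-primes}), but this is enough: once $m$ is large, $P(m)=\lfloor\exp(c_p\sqrt{\log m})\rfloor$ exceeds $c\log m\ge k$, so $\nu_p(k!)=0$. Combining the two prime ranges yields the product divisibility for every sufficiently large $m$ in the density-$1$ set furnished by the theorem, which remains of density $1$. The only real obstacle is the sharpness of the constant: the threshold $1/(2\log 2)$ is forced by the prime $p=2$, and since Lemma~\ref{lem:carry-chain} already attains the Bernoulli$(1/2)$ large-deviation rate, pushing $c$ beyond $1/(2\log 2)$ within this Kummer-based strategy would require a genuinely new ingredient at $p=2$.
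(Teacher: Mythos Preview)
Your overall approach matches the paper's: both reduce the product divisibility to the $p$-adic inequality $\nu_p\binom{2m}{m}-\nu_p\binom{m+k}{m}\ge\nu_p(k!)$, invoke Theorem~\ref{thm:general2} with $\delta$ small, bound $\nu_p(k!)\le c\log m/(p-1)$, and observe that the resulting constraint $(1-\delta)/(2\log p)\ge c/(p-1)$ is tightest at $p=2$. The paper splits primes at $p>k$ rather than at $p>P(m)$, but this is cosmetic.

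There is, however, a parameter-choice slip. After fixing $\delta$ small enough that $c<(1-\delta)/(2\log 2)$, you propose to apply Theorem~\ref{thm:general2} ``with the same $c$ and any $c_p\in(c,\sqrt{\log 2})$ satisfying $c_p^2<I(\delta)$.'' But for small $\delta$ one has $I(\delta)=\delta^2/2+O(\delta^3)$, so $\sqrt{I(\delta)}$ is small as well; when the corollary's $c$ is close to $1/(2\log 2)\approx 0.721$, the required $\delta$ is tiny and no $c_p>c$ with $c_p^2<I(\delta)$ exists. Concretely, for $c=0.7$ one needs $\delta<1-2c\log 2\approx 0.03$, whence $I(\delta)<10^{-3}\ll c^2$. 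The fix is simply not to reuse the corollary's $c$ as the theorem's $c$: choose the theorem's parameters $c_{\mathrm{thm}}<c_p<\sqrt{I(\delta)}$ arbitrarily small, noting that $K(m)=\lfloor\exp(c_{\mathrm{thm}}\sqrt{\log m})\rfloor$ still eventually dominates $c\log m$. The paper's proof leaves these theorem parameters implicit and so sidesteps the issue. With this correction your argument is complete and essentially identical to the paper's.
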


\begin{proof}
As in the previous Appendix, we are reduced to
\[
\frac{1-\delta}{2}\,\frac{\log m}{\log p}
\cdot [\,p\le P(m)\,] - \nu_p(k!) \ge 0.
\]
If $p>k$ this is obvious, so assume $p\le k$. Now $\nu_p(k!) \le k/(p-1) \le c \log m/(p-1)$. In Theorem \ref{thm:general2}, we can take $\delta > 0$ very small, so we just need
\[
\frac{1}{2 \log p} - \frac{c}{p-1} > 0.
\]
This is strictest at $p=2$, corresponding to the assumption.
\end{proof}

Finally, we investigate the optimality of these constants with ChatGPT.\footnote{\url{https://chatgpt.com/share/696d141d-5b98-8010-935d-9e6f12caae5d}} We prove Corollary \ref{cor:po-thm1-1} is optimal. Moreover, Corollary \ref{cor:po-thm1-2} is plausibly optimal by a heuristic but we are not able to prove this.

\begin{proposition}[Corollary \ref{cor:po-thm1-1} is optimal]
\label{prop:cor2-sharp}
Let $c>1/(2\log 2)$ and $k(m):=\lfloor c\log m\rfloor$.
Then for a set of $m \in \bbN$ with asymptotic density~$1$,
\[
(m+1)(m+2)\cdots(m+k(m))\ \nmid\ \binom{2m}{m}.
\]
\end{proposition}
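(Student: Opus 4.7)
The plan is to obtain an obstruction at the single prime $p=2$, where the critical constant $1/(2\log 2)$ emerges from a direct matching of two explicit valuations. Using $(m+1)(m+2)\cdots(m+k) = k!\,\binom{m+k}{k}$ and Legendre's formula,
\[
\nu_2\bigl((m+1)\cdots(m+k)\bigr)\ \ge\ \nu_2(k!)\ =\ k - s_2(k),
\]
where $s_2$ denotes the binary digit sum. On the other hand, in base $2$ every $1$-digit of $m$ forces a carry when doubling and no $0$-digit ever does, so by Kummer's theorem (Lemma \ref{lem:kummer}), $\nu_2\bigl(\binom{2m}{m}\bigr) = s_2(m)$. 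Hence, to rule out the divisibility it suffices to establish $k - s_2(k) > s_2(m)$.

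Fix $\varepsilon > 0$ small enough that $c > (1+3\varepsilon)/(2\log 2)$. With $k(m) = \lfloor c\log m\rfloor$, the crude bound $s_2(k) \le \log_2 k + 1 = O(\log\log m)$ gives
\[
k(m) - s_2(k(m))\ \ge\ \frac{1+2\varepsilon}{2\log 2}\,\log m
\]
for all large $m$. So it suffices to prove that $s_2(m) \le \tfrac{1+\varepsilon}{2}\log_2 m$ for a density-one set of $m$. This is the classical normality statement for binary digit sums: for $m$ uniform in the dyadic interval $[2^{\ell-1},2^{\ell})$, the low $\ell-1$ binary digits are i.i.d.\ Bernoulli$(1/2)$, and Chernoff yields $\Pr\bigl[s_2(m) > (\tfrac12+\tfrac{\varepsilon}{2})\ell\bigr] \le \exp(-c_\varepsilon\ell)$ for some $c_\varepsilon > 0$. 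Summing exceptional counts over dyadic scales $\ell \le \log_2 N$ produces an exceptional set in $[1,N]$ of size $o(N)$.

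The whole argument is short because the obstruction concentrates entirely at $p=2$; the only nontrivial ingredient is the standard Chernoff tail for binary digit sums, which is the one step requiring any real work. The matching between $c = 1/(2\log 2)$ and the typical size $s_2(m) \sim \tfrac12\log_2 m = \log m/(2\log 2)$ is exactly what makes the threshold tight, and also explains why no obstruction from odd primes is needed: doubling the $k!$ contribution against the digit sum at $p=2$ already saturates the constant in Corollary \ref{cor:po-thm1-1}.
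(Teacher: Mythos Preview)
Your proof is correct and follows essentially the same approach as the paper: obstruct at $p=2$, compute $\nu_2\binom{2m}{m}=s_2(m)$ and $\nu_2(k!)=k-s_2(k)$, and invoke the standard density-one bound $s_2(m)\le(\tfrac12+\varepsilon)\log_2 m$. The only cosmetic difference is that the paper phrases the first step as ``$(m+1)\cdots(m+k)\mid\binom{2m}{m}$ implies $k!\mid\binom{2m}{m}$'' rather than writing out $k!\binom{m+k}{k}$, and it cites the digit-sum fact directly rather than rederiving it via Chernoff on dyadic blocks.
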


\begin{proof}
We have that $(m+1)\cdots(m+k)\mid\binom{2m}{m}$ implies $k!\mid\binom{2m}{m}$. So it suffices to find a prime $p$ for which $\nu_p(k!)>\nu_p\binom{2m}{m}$. In fact, take $p=2$. By Legendre's formula $\nu_2(n!)=n-s_2(n)$, where $s_2(n)$ is the sum of binary digits of $n$. Hence
\[
\nu_2\binom{2m}{m}
=2s_2(m)-s_2(2m)=s_2(m).
\]
Now $\nu_2(k!)=k-s_2(k)= (1-o(1))k$.
It is a standard fact that for a fixed $\varepsilon>0$,
\[
 s_2(m)\le \Big(\tfrac12+\varepsilon\Big)\log_2 m
\qquad\text{for a set of $m \in \bbN$ with asymptotic density $1$.}
\]
The conclusion follows.
\end{proof}

\smallskip
For Corollary \ref{cor:po-thm1-2}, the value $\sqrt{\log 2}$ is plausibly optimal. We record the following heuristic argument. Let $c>\sqrt{\log 2}$ and put $K(m):=\big\lfloor e^{c\sqrt{\log m}}\big\rfloor$.
To show $m\notin S$ it suffices to find a prime $p$ and some $k\le K(m)$ such that
\[
\nu_p\binom{m+k}{m}\ge 1\quad\text{but}\quad \kappa_p(m)=0.
\]
By Kummer, $\kappa_p(m)=0$ is equivalent to no carries when doubling $m$ in base $p$, i.e. all base-$p$ digits of $m$ are in $\{0,1,\dots,\lfloor(p-1)/2\rfloor\}$.

We want to take primes in a window $p\in(K(m),(1+\delta)K(m))$ for a fixed small $\delta\in(0,1)$ (so $p<2K(m)$).
Then $K(m)<p$ and, for a fixed $k:=K(m)$,
$p\mid \binom{m+k}{m}$ iff $ m\bmod p\in\{p-k,\dots,p-1\}$.

So we need the conditions
\begin{enumerate}
\item[(i)] all base-$p$ digits of $m$ are $\le (p-1)/2$;
\item[(ii)] the least significant digit is in $\{p-k,\dots,\lfloor(p-1)/2\rfloor\}$.
\end{enumerate}
Condition (ii) requires $p\le 2k-1$, and in the window $p\in(K,(1+\delta)K)$ it holds with a fixed positive probability conditional on (i).

Fix a large $M$ and consider $m\in[M,2M]$, so $K(m)=K(M)(1+o(1))$.
Let $K:=K(2M)$ and fix a prime $p\in(K,(1+\delta)K)$. Let $L\approx \log_p M$ be the number of base-$p$ digits of $m$. Then
\[
\mathbb{P}(\kappa_p(m)=0)\ \approx\ \Big(\frac{1}{2}\Big)^{L}\
\ \approx\ \exp\Big(-\frac{\log 2}{c}\sqrt{\log M}\Big).
\]
Note multiplying by the constant conditional probability from (ii) does not change the scale. The number of primes in $(K,(1+\delta)K)$ is
\[
\pi((1+\delta)K)-\pi(K)\ \asymp\ \frac{\delta K}{\log K}
\ \asymp\ \frac{1}{O(\sqrt{\log M})}\,\exp\big(c\sqrt{\log M}\big).
\]
So the expected number of primes satisfying the obstruction conditions is heuristically
\[
\asymp\ \frac{1}{O(\sqrt{\log M})}
\exp\Big(\Big(c-\frac{\log 2}{c}\Big)\sqrt{\log M}\Big).
\]
This diverges as $M\to\infty$ precisely when $c>\sqrt{\log 2}$.

\end{document}